\newtheorem{thm}{Theorem}
\newtheorem{ex}{Example}
\newtheorem{crl}{Corollary}
\newtheorem{dfn}{Definition}
\newtheorem{st}{Statement}
\newtheorem{rk}{Remark}
\newtheorem{prop}{Proposition}
\title{Free Knots and Parity}
\author{Vassily Olegovich Manturov \footnote{Partially
supported by RFBR, No. 07--01--00648}}
 \def\R{{\mathbb R}}
 \def\Z{{\mathbb Z}}
 \def\N{{\mathbb N}}
 \newcommand{\ZG}{\Z_{2}{\mathfrak{G}}}
 \def\F{{\cal F}}
\newcommand{\ZGG}{\tilde \Z_{2}{\mathfrak{G}}}
\newcommand{\skcrossr}{\raisebox{-0.25\height}{\includegraphics[width=0.5cm]{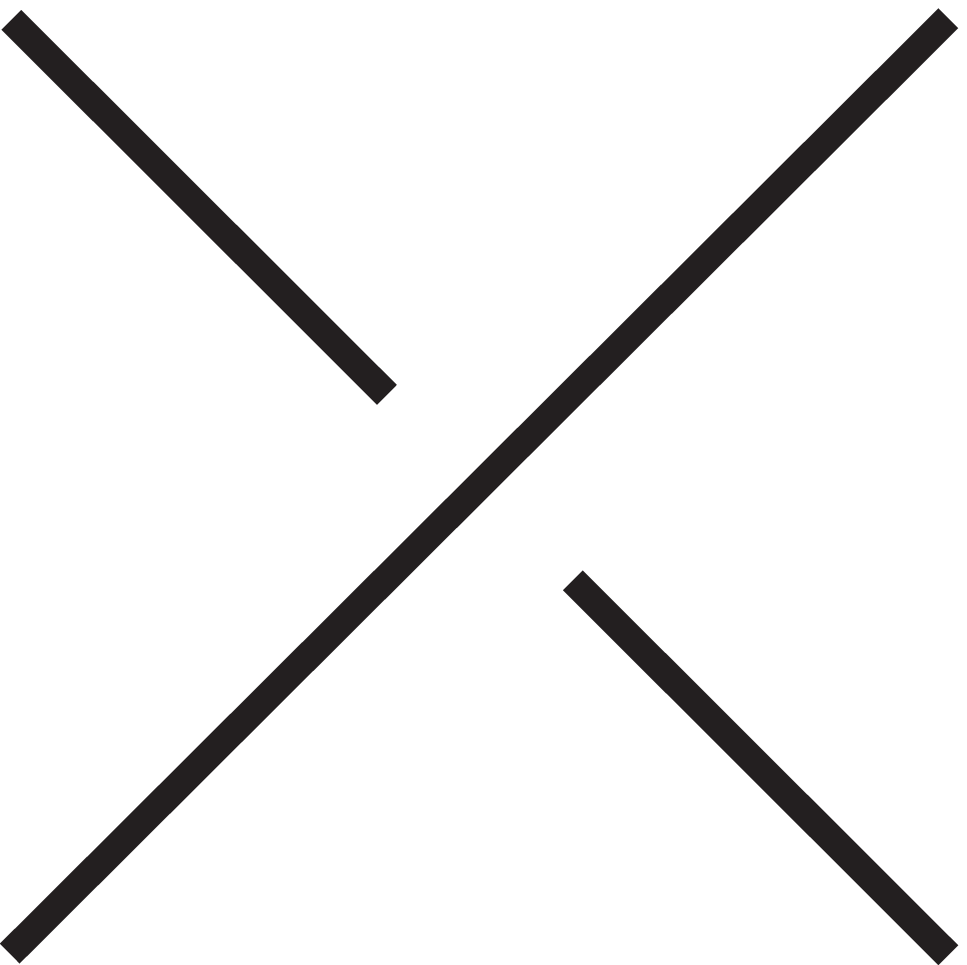}}}
\newcommand{\skcrv}{\raisebox{-0.25\height}{\includegraphics[width=0.5cm]{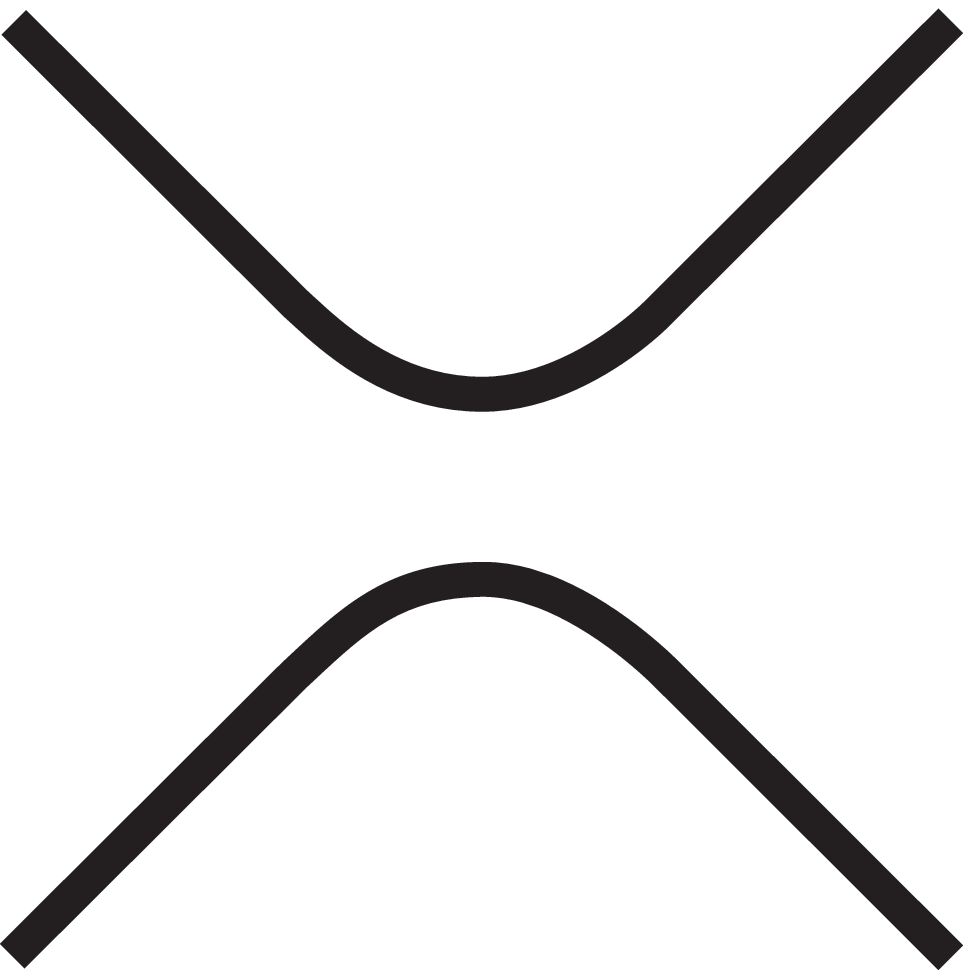}}}
\newcommand{\skcrh}{\raisebox{-0.25\height}{\includegraphics[width=0.5cm]{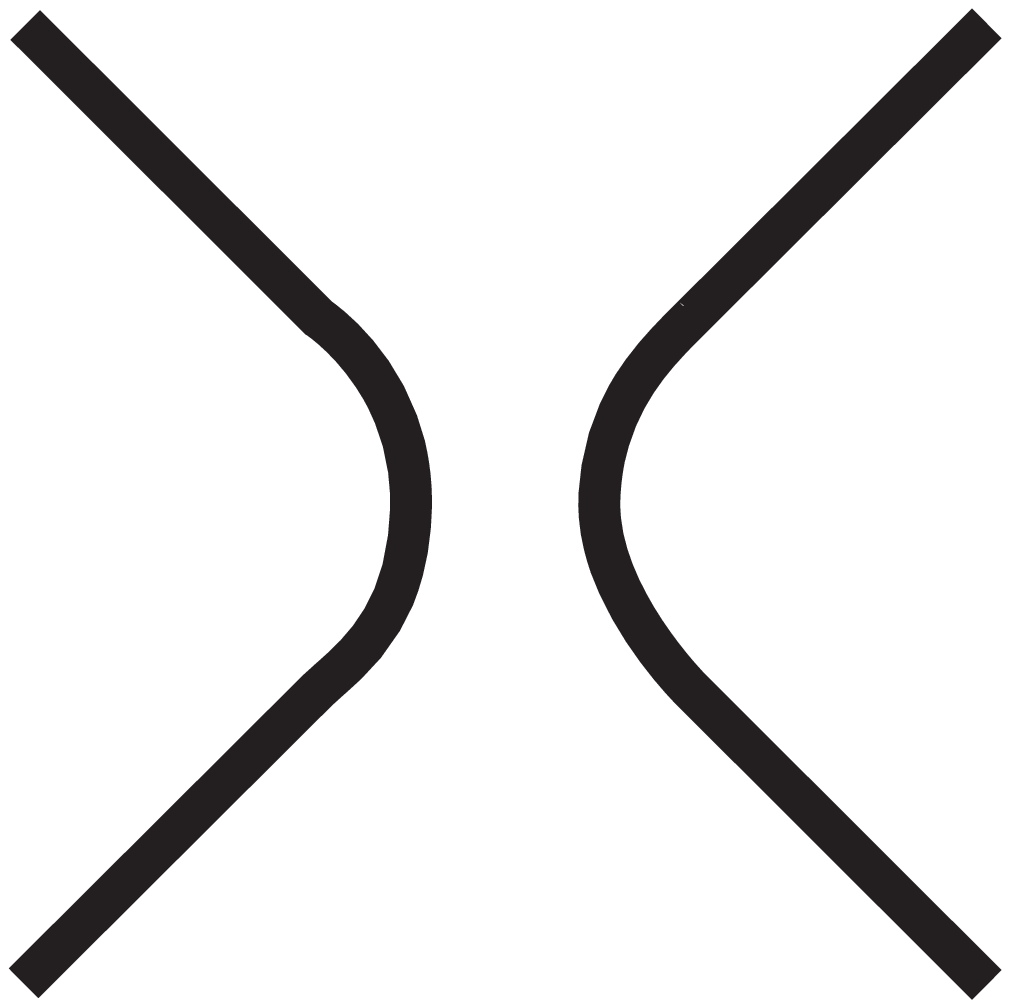}}}
\newcommand{\skcrro}{\raisebox{-0.25\height}{\includegraphics[width=0.5cm]{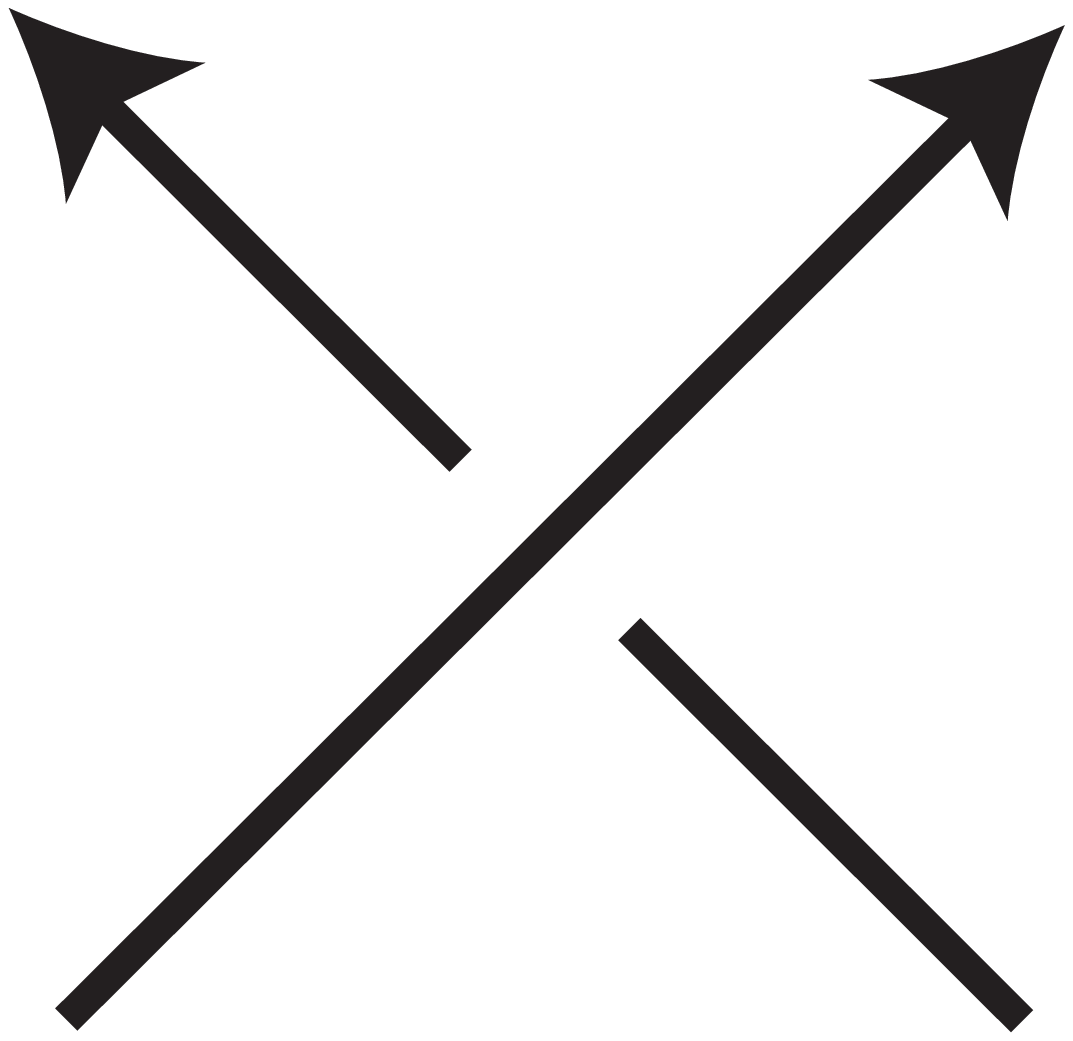}}}
\newcommand{\skcrlo}{\raisebox{-0.25\height}{\includegraphics[width=0.5cm]{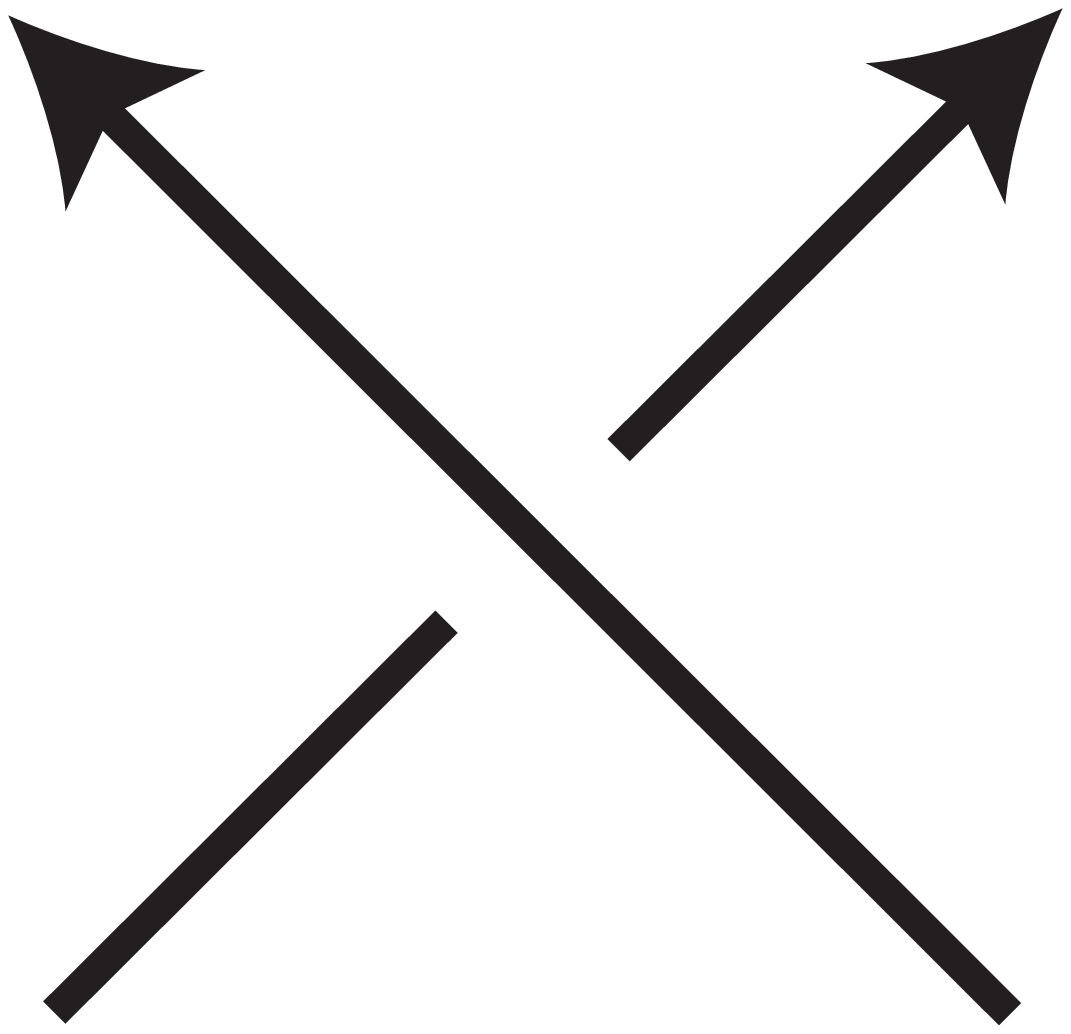}}}
\begin{document}

\maketitle

\abstract{We consider knot theories possessing a {\em parity}: each
crossing is decreed {\em odd} or {\em even} according to some
universal rule. If this rule satisfies some simple axioms concerning
the behaviour under Reidemeister moves, this leads to a possibility
of constructing new invariants and proving minimality and
non-triviality theorems for knots from these classes, and
constructing maps from knots to knots.

Our main example is virtual knot theory and its simplifaction, {\em
free knot theory}. By using Gauss diagrams, we show the existence of
non-trivial free knots (counterexample to Turaev's conjecture), and
construct simple and deep invariants made out of parity. Some
invariants are valued in graph-like objects and some other are
valued in groups.

We discuss applications of parity to virtual knots and ways of
extending well-known invariants.

The existence of a non-trivial parity for classical knots remains an
open problem. }

Keywords: knot, link, free knot, parity,

\section{Introduction}

The present paper describes the state-of-art of the theory of free
knots and links, a drastic simplification of virtual knot theory.
The main tool in our investigation is the concept of {\em parity}.
If there is a possibility to distinguish between two types of
crossings, {\em even ones} and {\em odd ones} in some knot theory,
this will allow us to construct new and powerful invariants and
improve some well-known invariants. In particular, parity was first
used to prove that free knots are in general non-trivial.

The first example of parity, called {\em Gaussian}, comes from the
chord diagram of a link: a chord (and, thus, the crossing
corresponding to it) is even whenever the number of chords linked
with it, is even. Summing up the properties of the Gaussian parity,
one can axiomatize the notion of parity for crossings. Most of
theorems we present here, work for {\em different} parities in
different situations.

Here we would like to emphasize the point of view that if there is a
possibility to distinguish between {\em different types} (not
necessarily two types) of crossings when looking at a diagram, this
information can be used to {\em improve many known invariants},
\cite{Ma,Ma1,Ma2,Af}. For rigorous proofs of most of the theorems
given here, see \cite{Ma,Ma1,Ma2}.

The concepts of the present paper have some far-reaching
generalizations in other situations. In particular, parity is very
helpful in the theory of graph-links, see review \cite{IM} in the
present volume.

The present paper is organized as follows. In the next section we
describe the notions of virtual knots and free knots.

Section 3 is devoted to the first examples and applications of
parity; we construct ``forgetting mappings'' and reprove the theorem
on ``closedness'' of the class of even diagrams.

Then in section 4 we redefine the concept of parity in terms of
homology of graphs.

Section 5 deals with invariants of free knots and virtual knots and
minimality results.

We conclude the section by a very simply constructed invariant of
free knots valued in a certain group, see \cite{MM}; this invariant
gives an obstruction for a free knot to be slice.

\subsection{Acknowledgement}

I am grateful to O.V.Manturov, D.P.Ilyutko for useful comments
concerning this work.

\section{Virtual knots. Free Knots}

Virtual knots were first introduced by Kauffman \cite{KaV} as an
extension of classical knots into knots in thickened surfaces
considered up to isotopy and stabilizations/destabilizations.
Virtual knots have a simple diagrammatic descriptions by planar
diagrams and Reidemeister moves. Another description of virtual
knots comes from Gauss diagrams \cite{GPV}: one admits all possible
Gauss diagrams (not necessarily realizing planar Gauss codes) and
factorizes them by formal Reidemeister moves.

A {\em virtual diagram} is a generic immersion of a framed $4$-graph
in â $\R^2$ with each vertex endowed with a classical crossing
structure, i.e. one pair of opposite half edges is defined to form
an overcrossing; the other two half-edges form an undercrossing;
when drawing a diagram on the plane, intersections of different
edges of the framed $4$-graphs in interior points are called {\em
virtual crossings}, and they are encircled.

By abusing notation, we shall allow a $4$-graph (four-valent graph)
to have connected free components homeomorphic to a circle; this
agrees with the fact that an unknot may have a crossingless diagram.
We shall denote the one-component crossingless diagram by
$\Gamma_{0}$.

A {\em virtual link} is an equivalence class of virtual diagrams by
usual Reidemeister moves applied to classical crossings and the {\em
detour move}. The detour move changes the immersion of an edge of
the graph: one takes an  edge fragment drawn on the plane (which has
only virtual crossings and self-crossings inside) and redraws it
arbitrarily in a generic way with all new crossings specified as
virtual.

So, every virtual knot or link has a ``frame'' being a framed
$4$-graph, whence virtual crossings ``are not there''. As shown in
\cite{GPV}, classical links embed into virtual links, i.e., {\em
every  two classical knots which are equivalent as virtual knots are
equivalent as classical knots (isotopic)}.

In fact, embedding theorems of such sort saying that the equivalence
relation on a subset coincides with the induced equivalence relation
coming from a set, can be proved by constructing projection ``maps''
from the whole set to a subset. The concept of parity can be used in
this direction in many cases, see Theorem \ref{ManturovViro} ahead.

We are now going to turn to a simplification of virtual knots
obtained by ``forgetting'' all structures except framing at
classical crossings.

{\em Free knots and links} (also known as homotopy classes of Gauss
words, resp., Gauss phrases, following Turaev \cite{Tu}) are defined
as follows.

By a {\em framed 4-graph} we mean a $4$-valent graph where for each
vertex we fix a way of splitting of the four emanating half-edges
into two pairs of edges called {\em (formally) opposite}.

Such graphs naturally arise from knot (or virtual knot) projections
or from generically immersed curves in $2$-surfaces.

A {\em free link} is an equivalence class of framed $4$-graphs
modulo the following {\em Reidemeister moves}:

1) addition/removal of a loop formed by two half-edges of the same
edge approaching a vertex from non-opposite sides;

2) addition/removal of a bigon with two edges being non-opposite at
both vertices;

3) the triangular move pulling a piece of unicursal curve through a
crossing formed by two other pieces of unicursal curves, see
Fig.\ref{sootvt}.

The  relation of half-edges to be opposite allows one to define the
notion of {\em component} and to count the number of {\em unicursal
components} of a framed graph, which turns out to be invariant under
Reidemeister moves. A {\em free knot} is a single-component free
link.

There are obvious ``forgetting maps'' from virtual links to free
links and from homotopy classes of curves on $2$-surfaces to free
links; classical knots (or planar curves) project to free unknots.

Thus, the study of invariants of free links may enlarge our
knowledge about virtual knots or knots in surfaces.

For two-component free links there is an obvious invariant: the
parity of number of the intersection points; for an $n$-component
free links there is a collection of mutual parities which can be
encoded by a graph on $n$ vertices.

Several years ago, V.G. Turaev \cite{TuProblem} conjectured that all
{\em free knots} are trivial.

This conjecture was recently disproved  by myself and
(independently, just some days later) A.Gibson \cite{Gib}. To do
that, I used the concept of {\em parity}.

The existence of a non-trivial parity for classical knots remains an
open problem. For different examples of parity, see \cite{Ma}.

\section{The concept of parity}

We shall encode knots and virtual knots \cite{KaV} by Gauss diagrams
\cite{GPV}. We say that two chords of a chord diagram are {\em
linked} if both ends of one of them lie in different connected
components of the two-component set obtained from the circle by
deleting the ends of the second components. We say that a chord $c$
of a Gauss diagram is {\em even} if the number of chords it is
linked with, is even, and {\em odd}, otherwise. We shall denote the
set of chords linked with $c$ by $E_{c}$; any chord is assumed not
to be linked with itself; by $E_{a}+E_{b}$ we shall mean $E_{a}\cup
E_{b}\backslash (E_{a}\cap E_{b})$. For two chords $a,b$, we write
$\langle a,b\rangle=1$ if $a$ and $b$ are linked, and $\langle
a,b\rangle=0$, otherwise. Here $+$ stays for the Boolean sum of
stes.

We say that a vertex of the  four-valent framed graph corresponding
to the Gauss diagram is {\em odd} whenever the chord corresponding
to this vertex is odd.

Let us summarize the properties of {\em even and odd chords} with
respect to their behaviour under Reidemeister moves. It is easy to
see that

\begin{enumerate}

\item The parity of a vertex participating in the first Reidemeister
move, is even.

The first Reidemeister move corresponds to an addition/removal of an
even vertex, and the pairwise incidence relation of the remanining
vertices does not change.

\item The second Reidemeister move adds (removes) two vertices $a,b$ of
the same parity.

Indeed, for such vertices $a,b$ herewith  $E_{a}+E_{b}$ is either
$\emptyset$ or coincides with $a+b$; after applying a Reidemeister
move the parity of the remaining vertices does not change. Neither
does the pairwise incidence of the remaining vertices.

\item The sum of parities (modulo 2) of the three vertices participating in a
third Reidemeister move is even. The parity of a vertex
participating in a third Reidemeister move does not change after
this move is applied.

Indeed, when performing the third Reidemeister move, we have three
vertices (chords) $a,b,c$, for which  $E_{a}+E_{b}+E_{c}\subset
\{a,b,c\}, |E_{a}+E_{b}+E_{c}|=0$ or $2$. After performing the move,
we get instead of $a,b,c$ three vertices $a',b',c'$ with pairwise
switched incidences (w.r.t. $a,b,c$) the remaining incidences are
unchanged: for $f,g\notin \{a,b,c\}: \langle f,g\rangle$ remains
unchanged:

a) $\langle f,a\rangle=\langle f,a'\rangle;\langle
f,b\rangle=\langle f,b'\rangle;\langle f,c\rangle=\langle f,
c'\rangle$ and

b) the number of {\em odd} vertices amongst $a,b,c$ is {\em even}
(is equal to zero or two).

\end{enumerate}

We call the parity of vertices of the Gauss diagram (and
corresponding chords)  the {\em Gaussian parity}.

\begin{dfn}
By a {\em parity} we shall mean a function on the set of all
four-valent framed graphs which satisfies the conditions above
(assuming all vertices not taking part in a Reidemeister move
preserve their parity).
\end{dfn}

\begin{rk}
Here, the vertices of a 4-valent graph are not enumerated, so the
parity function on should be symmetric with respect to the action of
the group of symmetries of the graph respecting the framing.
\end{rk}

In the case when we deal not merely with free knots or links, but
with knots having some decorations, by {\em parity} we shall mean
the property of crossings satisfying the same axioms with respect to
the Reidemeister moves. Thus, for instance, the second parity axiom
for the second Reidemeister move requires that the parity is the
same for both vertices of any ``bigon'' in the case of free knots,
whence for virtual knots, we shall require this only for those pairs
of crossings which can participate in a second Reidemeister move,
that is, for two crossings {\em of opposite signs}
($\skcrlo,\skcrro$) forming a bigon.

Certainly, each parity defined for free links induces a parity for
virtual knots, but not vice versa.

In the sequel, we shall state some theorems using parity. Unless
specified otherwise, such theorems will hold {\em for any parity}.

The existence of non-trivial parity for classical knots remains an
open problem.

\subsection{The functorial map $f$}

\label{FMap}

It turns out that the parity axioms listed above lead to a simple
and powerful map on the set of free knots and, more generally, on
the set of virtual knots.

Let $K$ be a virtual knot diagram. Let $f$ be a diagram obtained
from $K$ by making all {\em odd} crossings virtual. In other words,
we remove all odd chords.

The following theorem follows from definitions.

\begin{thm}
The map $f$ is a well-defined map on the set of all virtual knots.
For a virtual knot diagram $K$, $f(K)=K$ iff all crossings of $K$
are even. Otherwise, the number of classical crossings of $f(K)$ is
strictly less than the number of classical crossings of
$K$.\label{vyro}
\end{thm}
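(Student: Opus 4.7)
The plan is to check well-definedness on each Reidemeister move separately, using the three parity axioms listed above; the remaining two assertions are immediate from the construction. Concretely, set $f(K)$ to be the diagram obtained by replacing every odd classical crossing of $K$ by a virtual crossing (leaving the underlying framed 4-graph alone except for the decoration). We need to show that whenever $K\to K'$ is a single Reidemeister move on classical crossings, we have $f(K)\sim f(K')$ as virtual knot diagrams; since virtual equivalence includes the detour move, virtual crossings can be freely rerouted.

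First I would handle the three cases one by one. For an R1 move, axiom 1 says the participating crossing is even, hence classical in both $f(K)$ and $f(K')$, and the move survives verbatim in $f(K)$. For an R2 move, axiom 2 says the two participating crossings have the same parity: if both are even, both survive and we apply the same R2 in $f(K)$; if both are odd, both become virtual, and the resulting bigon of two virtual crossings is removable by the detour move, matching $f(K')$. For an R3 move, axiom 3 says the number of odd participants is $0$ or $2$, and the parities of the three crossings are unchanged by the move. If all three are even, R3 is performed classically in $f(K)$; if one is even and two are odd, the local picture in $f(K)$ consists of one classical crossing and two virtual crossings, and the corresponding local picture in $f(K')$ differs only by a detour — the single classical crossing can be slid freely past the virtual strands. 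In each case $f(K)\sim f(K')$.

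The \textbf{main obstacle} is the mixed R3 case: one must verify carefully that a ``virtualized'' R3 configuration (one classical crossing and two virtual crossings arranged as a triangle) is detour-equivalent before and after the move. I would argue this by observing that, with only one classical crossing surviving, the two other strands meet it only at virtual crossings, so their arcs inside the R3 disk can be redrawn arbitrarily by the detour move; thus the before- and after-pictures are the same virtual diagram up to detour. The axiom that the parity of crossings not participating in the move is preserved ensures that the classification into ``odd'' versus ``even'' is consistent across the move, so that $f$ is defined unambiguously on the result.

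Once well-definedness is established, the remaining assertions are trivial: $f(K)=K$ as decorated diagrams exactly when no crossing has been virtualized, i.e. when every crossing is even; and if any crossing is odd, at least one classical crossing is replaced by a virtual crossing, so the number of classical crossings strictly decreases. This concludes the proof outline.
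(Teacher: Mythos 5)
Your proposal is correct, and it is exactly the routine verification the paper has in mind when it states that the theorem ``follows from definitions'': one checks each Reidemeister move against the three parity axioms (R1 crossing even, R2 pair of equal parity, R3 with zero or two odd crossings), and in the cases where crossings get virtualized one uses the detour move to reroute the strand whose crossings inside the disk are all virtual. The paper prints no proof of its own (referring to \cite{Ma,Ma1,Ma2} for details), so there is nothing further to compare; your case analysis, including the mixed R3 case handled by a detour of the odd strand, is the intended argument.
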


\subsection{Other examples of parities. The parity hierarchy}

The Gaussian parity (defined via intersection as above) is not the
only parity for knots. We shall give two more examples which can be
applied in different situation.

The first example deals with two-component free links. A crossing of
a $2$-component free link is called {\em even} if it is formed by
two branches of the same component; otherwise it is called {\em
odd}.

Another example deals with the set of virtual crossings being even.
Namely, consider the set of virtual knots with all crossings having
{\em even} Gaussian parity. It follows from Theorem \ref{vyro} that
this set is closed with respect to the Reidemeister moves, i.e.,
whenever two diagrams having even crossings are equivalent, they are
equivalent by a sequence of Reidemeister moves where all
intermediate diagrams have all even crossings. Denote this class of
knots by ${\mathfrak{V}^{1}}$; evidently, it includes all classical
knots. We shall show that there exists a natural {\em filtration} on
the set of all virtual knots:

$${\mathfrak{V}}^{0}\supset{\mathfrak{V}}^{1}\supset
{\mathfrak{V}}^{2}\supset \cdots \supset{\mathfrak{V}}^{n}\supset
\cdots,$$ which starts with the set of ${\mathfrak{V}^{0}}$ of all
virtual knots and has as a limit some set ${\mathfrak{V}^{\infty}}$
of ``knots of index zero'' including all classical knots.

So, let  $D$ be a virtual diagram, and let $G(D)$ be the
corresponding Gauss diagram. We shall endow the diagram $G$ with
signs and arrows in the usual way (the sign plus corresponds to
$\skcrro$ and the sign minus corresponds to $\skcrlo$; an arrow is
directed from the pre-image of the arc forming an overpass to the
pre-image of the arc forming an underpass).

With each classical crossing we associate its {\em index} which will
be a non-negative integer. Let $X$ be a classical crossing of $D$,
and let $c(X)$ be the corresponding (oriented) chord of $G(D)$.
Consider all chords of the diagram  $G(D)$, which are linked with
$c(X)$. Let us calculate the sum of signs of all those chords
intersecting $c(X)$ form the left to the right and subtract the sum
of signs of those chords  intersecting $c(X)$ from the right to the
left.

The {\em index of $c$} is the absolute value of the above quantity;
we shall denote it by $ind(X)\equiv ind(c(X))$.

Clearly, a chord is even whenever its index is even.

Let us collect some facts concerning the index; the proof can be
obtained by a simple check:

\begin{st}
\begin{enumerate}

\item Any chord taking part in a Reidemeister-1 move has index $0$.

\item In the Reidemeister-2 move indices of the two chords are the same.

\item Like parity, the index is invariant under the third Reidemeister move
$ind(a)=ind(a'),ind(b)=ind(b'),ind(c')$; moreover, if some three
chords $a,b,c$ take part in a third Reidemeister move then
$ind(a)\pm ind(b)\pm ind(c)=0$, see Fig. \ref{sootvt}.

\item The index remains unchanged under any Reidemeister move for those
chords not participating in this move.

\item All crossings of a classical diagram have index zero.

\end{enumerate}\label{st1s}
\end{st}

\begin{figure}
\centering\includegraphics[width=200pt]{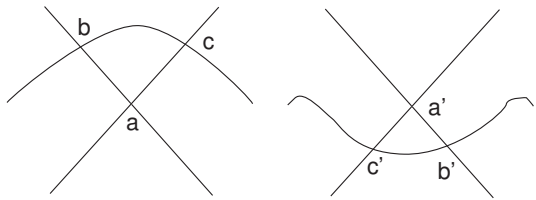}
\caption{Correspondence of crossings for the third Reidemeister
move} \label{sootvt}
\end{figure}

The properties described above show that the index can be used for
defining parities.

Indeed, from Statement \ref{st1s} we see that for diagrams from
${\mathfrak V^{1}}$ we may introduce the following parity: let $K$
be a knot diagram from ${\mathfrak V^{1}}$; we decree those
crossings of $K$ having index divisible by four, to be even, and
those having index congruent to two modulo four, to be odd.

From Statement \ref{st1s}, it follows that the parity defined in
this way, satisfies all the axioms.

Applying  $f$ to knots from ${\mathfrak V^{1}}$ with respect to the
parity described above (Gaussian parity) we get the set  ${\mathfrak
V^{2}}$ of diagrams with all indices divisible by four.

Arguing as above, we define the sets ${\mathfrak V^{k}}$ of diagrams
with indices divisible by $2^{k}, k\in\N$. Let ${\mathfrak
V^{\infty}}$ be the set of diagrams with all crossings having index
zero.

The following theorem holds
\begin{thm}
Let $K,K'$ be two diagrams of virtual knots from ${\mathfrak V^{k}}$
(where $k$ is a positive integer or  $\infty$), representing
equivalent virtual knots. Then there is a chain of diagrams
$K=K_{0}\to K_{1} \dots \to K_{n}=K'$ from ${\mathfrak V^{k}}$ where
every two adjacent diagrams differ by a Reidemeister move (or a
detour move).\label{index}
\end{thm}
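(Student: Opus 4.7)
The plan is induction on $k$ using the functorial map $f$ of Theorem \ref{vyro}. The crucial observation is that $\mathfrak{V}^k$ carries a natural parity in the sense of this paper: declare a crossing of $K\in\mathfrak{V}^k$ \emph{even} if its index is divisible by $2^{k+1}$, and \emph{odd} if it is divisible by $2^k$ but not by $2^{k+1}$. Properties (1)--(4) of Statement \ref{st1s}, after dividing indices by $2^k$ and reducing modulo $2$, translate directly into the three parity axioms: an R1 chord has index $0$, hence is even; an R2 pair have equal indices and so equal parities; and for R3 the identity $\mathrm{ind}(a)\pm\mathrm{ind}(b)\pm\mathrm{ind}(c)=0$ forces an even number of odd chords among $a,b,c$, with the indices of the remaining chords unchanged.

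The base $k=0$ is tautological. For the inductive step, take $K,K'\in\mathfrak{V}^{k+1}\subset\mathfrak{V}^k$ equivalent. By the hypothesis at level $k$ there is a chain $K=K_0\to K_1\to\cdots\to K_n=K'$ in $\mathfrak{V}^k$ of Reidemeister/detour moves. Apply the functorial map $f$ associated to the level-$k$ parity to every diagram in the chain. Since every crossing of $K$ and of $K'$ has index divisible by $2^{k+1}$, both are fixed by $f$, so $f(K_0)=K$ and $f(K_n)=K'$; and by construction each $f(K_i)$ lies in $\mathfrak{V}^{k+1}$.

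It remains to verify, case by case, that consecutive images $f(K_i)\to f(K_{i+1})$ are related by a single Reidemeister move or detour move. R1 operates on an even chord and descends unchanged. An R2 on two even chords, or an R3 on three even chords, descends unchanged. An R2 on two odd chords yields, after $f$, a bigon with two virtual crossings that collapses by a detour. An R3 with exactly two odd chords virtualizes that pair, and the remaining single classical crossing is reached from its predecessor by a detour move rerouting the two virtualized strands. In every case the elementary step between $f(K_i)$ and $f(K_{i+1})$ stays in $\mathfrak{V}^{k+1}$, completing the inductive step. The case $k=\infty$ follows by finite iteration: a chain in $\mathfrak{V}^0$ joining $K,K'\in\mathfrak{V}^{\infty}$ is finite and uses diagrams with finitely many crossings, so all indices appearing in the chain are bounded by some $N$; after $\lceil\log_2 N\rceil+1$ applications of the inductive construction no positive index survives and the chain lies entirely in $\mathfrak{V}^{\infty}$.

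The main obstacle is the last subcase, R3 with exactly two odd participants: one must confirm that after virtualizing the two odd chords the local rearrangement is genuinely realized by a detour move. This is a careful local diagrammatic check, justified by the fact that virtual crossings may be freely rerouted, and is exactly what the parity axioms for the third Reidemeister move were designed to ensure.
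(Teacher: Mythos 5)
Your overall strategy is the one the paper itself uses for Theorem \ref{index}: take a chain of Reidemeister/detour moves joining $K$ and $K'$, push every diagram of the chain through the functorial map $f$ of Theorem \ref{vyro} taken with respect to the index-defined parity of the current level of the hierarchy, note that $K$ and $K'$ are fixed because all their crossings are even at that level, and settle $k=\infty$ by bounding the indices occurring in a finite chain. Your explicit verification of the parity axioms from Statement \ref{st1s} and your case analysis of how individual moves descend under $f$ (including the delicate R3 case with two odd chords) is exactly the content the paper delegates to the well-definedness assertion of Theorem \ref{vyro} and to \cite{Ma,Ma1,Ma2}, so that part is consistent with the paper.

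There is, however, one step that fails as stated: ``by construction each $f(K_i)$ lies in $\mathfrak{V}^{k+1}$.'' Making the odd crossings virtual deletes their chords from the Gauss diagram, and this changes the indices of the surviving chords; a crossing whose index was divisible by $2^{k+1}$ before the deletion need not remain so, and an even crossing can even become odd. Already at the bottom level ($k=0$, Gaussian parity) a single application of $f$ need not land in $\mathfrak{V}^{1}$: take a Gauss diagram with chords $c_1,c_2,o_1,o_2$ whose linking graph is the triangle $c_1c_2o_1$ with the extra edge $o_1o_2$; then $c_1,c_2$ are even and $o_1,o_2$ are odd, but after deleting $o_1,o_2$ the chords $c_1,c_2$ are linked only with each other and so become odd. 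This is precisely why the paper's proof applies $f$ \emph{repeatedly} at each level until the chain stabilizes: by Theorem \ref{vyro} each application either fixes a diagram --- which happens exactly when all its crossings are even at that level, i.e.\ the diagram lies in $\mathfrak{V}^{k+1}$ --- or strictly decreases the number of classical crossings, so the iteration terminates after finitely many steps, and the endpoints $K,K'$ remain untouched throughout because they are already fixed points of $f$. Replacing your single application by this finite iteration (with the corresponding harmless adjustment of the count in the $k=\infty$ case) turns your argument into the paper's.
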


The proof of Theorem \ref{index} is obtained as follows. Starting
from an arbitrary chain connecting $K$ to $K'$, we consequently
apply the map $f$ for the Gaussian parity until the chain belongs to
${\mathfrak V^{1}}$, then we consequently apply the other $f$ to get
a sequence from ${\mathfrak V^{2}}$, and so on, until we get a chain
from ${\mathfrak V^{k}}$. Since all of our maps $f$ (applied to
different ${\mathfrak V^{k}}$) are well defined, each two
consecutive elements in each of our chains will be connected by a
Reidemeister move or a detour move.

 In the case $k=\infty$ it is
sufficient to apply the trick from Theorem \ref{ManturovViro}
finitely many times, also. Indeed, consider an arbitrary chain
connecting $K$ to $K'$. Assume the maximal number of chords of
diagrams in this chain does not exceed $m<2^{m}$. If an index of
some chord is divisible by $2^{m}$ then this index is equal to zero.
So, it will be sufficient to apply the map $f$ $m$ times to get a
sequence of diagrams from ${\mathfrak V}^{\infty}$.

Note that the class ${\mathfrak V^{\infty}}$ is quite interesting:
it is an ``approximation'' of classical knots in the set of virtual
knots. All invariants defined on the set ${\mathfrak V^{\infty}}$,
can be translated to virtual knots by using $f$ (in virtue of
Theorem \ref{index}). Obviously, all classical diagrams are diagrams
belong to ${\mathfrak V}^{\infty}$. A non-classical knot diagram $D$
from ${\mathfrak V^{\infty}}$ is given in Fig. \ref{neklassich}.

\begin{figure}
\centering\includegraphics[width=150pt]{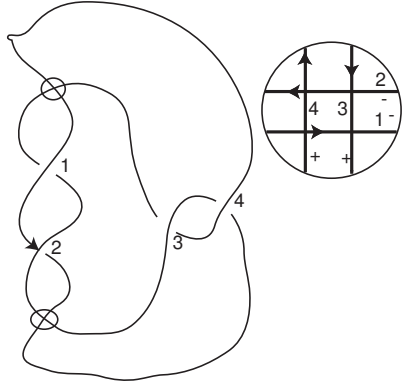} \caption{A
non-classical diagram from ${\mathfrak V^{\infty}}$}
\label{neklassich}
 \end{figure}

\section{Parity as homology}

Consider a framed $4$-graph $\Gamma$ with one unicursal component.
The homology group $H_{1}(\Gamma,\Z_{2})$ is generated by ``halves''
corresponding to vertices: for every vertex $v$ we have two halves
of the graph  $\Gamma_{v,1}$ and $\Gamma_{v,2}$, obtained by
smoothing at this vertex, see Fig. \ref{smo}. If the set of framed
$4$-graphs (possibly, with some further decorations at vertices) is
endowed with a {\em parity}, we may assume that we are given the
following cohomology class $h$: for each of the halves
$\Gamma_{v,1},\Gamma_{v,2}$ we set
$h(\Gamma_{v,1})=h(\Gamma_{v,2})=p(v)$, where $p(v)$ is the parity
of the vertex $v$. Taking into account that every two halves sum up
to give the cycle generated by the whole graph, we have defined a
``characteristic'' cohomology class $h$ from $H_{1}(\Gamma,\Z_{2})$.

\begin{figure}
\centering\includegraphics[width=180pt]{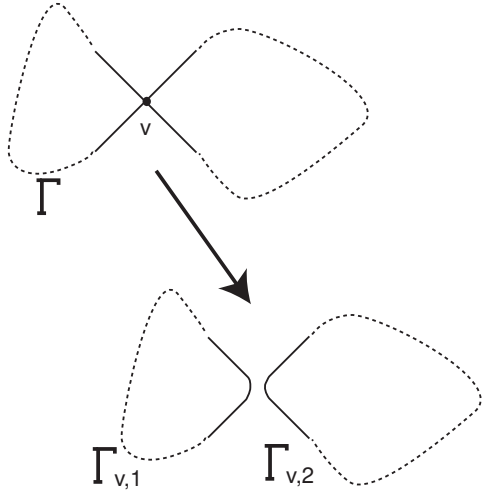} \caption{The graphs
$\Gamma_{v,1}$ and $\Gamma_{v,2}$} \label{smo}
\end{figure}

Collecting the properties of this cohomology class we see that

\begin{enumerate}

\item For every framed $4$-graph $\Gamma$ we have $h(\Gamma)=0$.

\item If $\Gamma'$ is obtained from  $\Gamma$ by a first
Reidemeister move adding a loop then for every basis
$\{\alpha_{i}\}$ of $H_{1}(\Gamma,\Z_{2})$ there exists a basis of
the group $H_{1}(\Gamma,\Z_{2})$ consisting of one element $\beta$
corresponding to the loop and a set of elements $\alpha'_{i}$
naturally corresponding to $\alpha_{i}$.

Then we have $h(\beta)=0$ and $h(\alpha_{i})=h(\alpha'_{i})$.

\item Let $\Gamma'$ be obtained from $\Gamma$ by a third increasing
Reidemeister move. Then for every basis $\{\alpha_{i}\}$ of
 $H_{1}(\Gamma,\Z_{2})$ there exists a basis in
$H_{1}(\Gamma',\Z_{2})$ consisting of one ``bigon'' $\gamma$, the
elements $\alpha'_{i}$ naturally corresponding to $\alpha_{i}$ and
one additional element $\delta$, see Fig. \ref{r2r3}, left.

\begin{figure}
\centering\includegraphics[width=220pt]{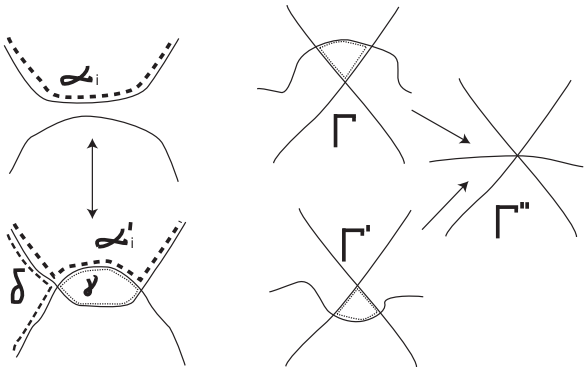} \caption{The
cohomology condition for Reidemeister moves} \label{r2r3}
\end{figure}

Then the following holds: $h(\alpha_{i})=h(\alpha'_{i})$,
$h(\gamma)=0$.

\item Let $\Gamma'$ be obtained from $\Gamma$ by a third
Reidemeister move. Then there exists a graph $\Gamma''$ with one
vertex of valency  $6$ and the other vertices of valency $4$ which
is obtained from either of $\Gamma$ or $\Gamma'$ by contracting the
``small'' triangle to the point. This generates the mappings
$i:H_{1}(\Gamma,\Z_{2})\to H_{1}(\Gamma'',\Z_{2})$ and
$i':H_{1}(\Gamma',\Z_{2})\to H_{1}(\Gamma'',\Z_{2})$, see Fig.
\ref{r2r3}, right.

Then the following holds: the cocycle $h$ is equal to zero for small
triangles, besides that if  for $a\in H_{1}(\Gamma,\Z_{2}),a'\in
H_{1}(\Gamma',\Z_{2})$ we have $i(a)=i'(a')$, then $h(a)=h(a')$.

\end{enumerate}

Thus, every parity for free knots generates some $\Z_{2}$-cohomology
class for all framed $4$-graphs with one unicursal component, and
this class behaves nicely under Reidemeister moves.

The converse is true as well. Assume we are given a certain
``universal'' $\Z_{2}$-cohomology class for all four-valent framed
graphs satisfying the conditions 1)-4) described above. Then it
originates from some {\em parity}. Indeed, it is sufficient to
define the parity of every vertex to be the parity of the
corresponding half. The choice of a particular half does not matter,
since the value of the cohomology class on the whole graph is zero.
One can easily check that parity axioms follow.

This point of view allows to find parities for those knots lying in
$\Z_{2}$-homologically nontrivial manifolds. For more details, see
\cite{Ma}.

\section{Invariants and minimality examples}

As an example showing the power of the notion of parity we present
the following theorem.

\begin{thm}
Let $K$ be a four-valent framed graph with one unicursal component
such that all chords of $K$ are odd and no second decreasing
Reidemeister move can be applied to $K$. Then $K$ is a minimal
diagram of the corresponding free link in the following strong
sense: for any diagram $K'$ equivalent to $K$ there is a smoothing
of $K'$ isomorphic to the graph $K$.\label{mnm}
\end{thm}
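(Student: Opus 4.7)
The plan is to construct a \emph{parity bracket} invariant $[D]$ valued in $\Z_{2}$-linear combinations of framed $4$-graphs with all vertices odd and no second Reidemeister move applicable (call these \emph{irreducible}). Given any one-component framed $4$-graph $D$, I form $[D]$ in two stages: first, smooth each even crossing of $D$ in both possible ways to obtain a formal $\Z_{2}$-sum of $2^{e}$ framed $4$-graphs, where $e$ is the number of even crossings; second, iteratively eliminate any bigon formed by two odd crossings (i.e.~a pair on which a decreasing R2 could be applied) by smoothing it away, until every surviving summand is irreducible. When $D=K$ satisfies the hypotheses of the theorem, neither stage does anything, so $[K]=K$ as a single nonzero summand.

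The heart of the proof is to verify that $[D]$ is an invariant under the Reidemeister moves, so that $[K']=[K]=K$ for every diagram $K'$ equivalent to $K$. For a first Reidemeister move the added crossing is even by the first parity axiom; its two smoothings produce one summand containing a spurious disconnected circle (which I discard under the unicursality convention) and one summand isomorphic to the original, so nothing new is contributed. For a second Reidemeister move the two new crossings have equal parity: if both are even, the four smoothings pair off and cancel modulo $2$; if both are odd, the resulting bigon is killed by the reduction stage. For a third Reidemeister move the parity axiom forces the number of odd crossings among $a,b,c$ to be $0$ or $2$; one enumerates the $2^{3-k}$ smoothings on the even vertices of the triangle (with $k$ the number of odd ones) on each side and matches them term by term, using that the triangle interacts identically with its exterior on both sides of the move and that the $k=2$ case is handled precisely by a subsequent R2-reduction of the two odd vertices.

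Once invariance is established, the theorem follows: for any $K'$ equivalent to $K$ we have $[K']=[K]=K$ as formal sums, so the irreducible graph $K$ must appear among the summands obtained by smoothing all even crossings of $K'$ and then smoothing the pairs of odd crossings that form R2-bigons. Since each of these operations is itself a smoothing of one or two vertices, their composition exhibits a subset of vertices of $K'$ whose simultaneous smoothing yields a framed $4$-graph isomorphic to $K$, which is exactly the conclusion. The main obstacle, as is standard for such bracket constructions, is the invariance under the third Reidemeister move: one must carefully enumerate the state sums on both sides, track how the $\Z_{2}$-coefficients transform, and check that subsequent R2-reductions identify the corresponding irreducible terms in the two sub-cases $k=0$ and $k=2$; the bookkeeping relies crucially on property~3 of the parity axioms (that pairwise incidences outside $\{a,b,c\}$ are unchanged), so that smoothings at distant crossings commute with the R3 move.
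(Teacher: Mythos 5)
Your construction is essentially the paper's own proof: the bracket you describe is the parity bracket $[\Gamma]$ of Theorem \ref{mainthm}, valued in $\Z_{2}\mathfrak{G}$ (framed $4$-graphs modulo second Reidemeister moves, equivalently their unique irreducible representatives), and your final deduction --- $[K']=[K]=K$ forces a summand, hence a smoothing, of $K'$ isomorphic to $K$ --- is exactly the proof of Corollary \ref{sld}. Two points in your sketch need repair, however. First, the restriction to smoothing states with \emph{one unicursal component} must be built into the definition of the bracket (as in the paper's $[\Gamma]=\sum_{s\,even,\,1\,comp}\Gamma_{s}$), not invoked as a side ``unicursality convention'' only during the first-move check: if all $2^{e}$ states are kept, the bracket is not invariant under the second move, because the state in which both bigon crossings are smoothed the other way contributes the reduced diagram together with a split circle, and nothing cancels it. Second, and relatedly, your bookkeeping for an even--even second Reidemeister move is wrong as stated: the four states do not ``pair off and cancel modulo $2$''; rather, the two mixed states cancel mod $2$, the state producing the split circle is excluded by the one-component condition, and the one remaining state reproduces the corresponding state of the reduced diagram --- this surviving term is precisely what gives $[D]=[D_{0}]$ rather than $0$. (A harmless inaccuracy: the summands of the bracket need not have all vertices odd as diagrams in their own right, since Gaussian parity is not inherited under smoothing; nothing in the argument uses this, provided your reduction stage is understood as full R2-reduction of each summand, whose confluence is the paper's ``trivial'' uniqueness theorem.) With these corrections your outline coincides with the argument behind Theorem \ref{mainthm} and Corollary \ref{sld}; the remaining substantive work is, as you say, the third-move verification, which indeed proceeds by matching states up to second moves, mod-$2$ cancellation and exclusion of multi-component states, and is carried out in detail in \cite{Ma,Ma1}.
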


We shall call four-valent framed graphs from the formulation of
Theorem \ref{mnm} {\em irreducibly odd}.

In fact, we shall see that irreducibly odd diagrams are mininmal in
a stronger sense, see Corollary \ref{sld} ahead.

To prove this theorem, we shall introduce an invariant $[\cdot]$
valued in linear combinations of some graphs (more precisely,
equivalence classes of graphs), which allows to reduce all the
Reidemeister moves to the second Reidemeister move only.

Let ${\mathfrak{G}}$ be the set of all equivalence classes of framed
 graphs with one unicursal component modulo second Reidemeister moves. Consider the linear space $\ZG$.

Let $\Gamma$ be a framed graph, let  $v$ be a vertex of $\Gamma$
with four incident half-edges $a,b,c,d$, s.t.  $a$ is opposite to
$c$ and $b$ is opposite to $d$ at $v$.

By {\em smoothing} of $\Gamma$ at $v$ we mean any of the two framed
$4$-graphs obtained by removing $v$ and repasting the edges as
$a-b$, $c-d$ or as $a-d,b-c$, see Fig. \ref{smooth}.

\begin{figure}
\centering\includegraphics[width=200pt]{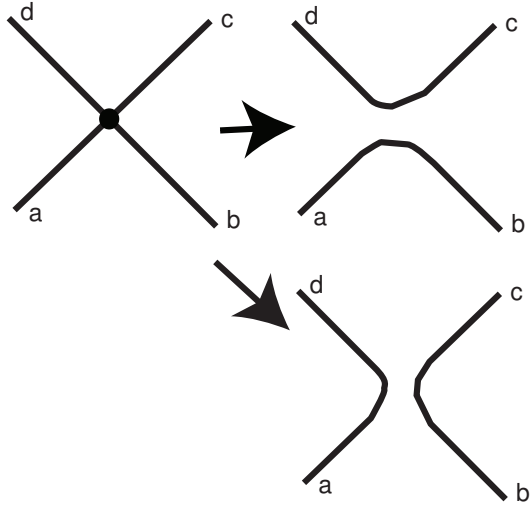} \caption{Two
smoothings of a vertex of for a framed graph} \label{smooth}
\end{figure}

Herewith, the rest of the graph (together with all framings at
vertices except $v$) remains unchanged.

We may then consider further smoothings of $\Gamma$ at {\em several}
vertices.

Consider the following sum

\begin{equation}
[\Gamma]=\sum_{s\;even.,1\; comp} \Gamma_{s},
\end{equation}
which is taken over all smoothings in all {\em even} vertices, and
only those summands are taken into account where $\Gamma_{s}$ has
one unicursal component.

Thus, if  $\Gamma$ has $k$ even vertices, then $[\Gamma]$ will
contain at most $2^{k}$ summands, and if all vertices of $\Gamma$
are odd, then we shall have exactly one summand, the graph $\Gamma$
itself.

Consider  $[\Gamma]$ as an element of $\ZG$. In this case it is
evident, for instance, that if all vertices of $\Gamma$ are even
then $[\Gamma]=[\Gamma_0]$: by construction, all summands in the
definition of $[\Gamma]$ are equal to $[\Gamma_0]$, it can be easily
checked that the number of such summands is odd.

Now, we are ready to formulate the main result of the present
section:

\begin{thm}
If $\Gamma$ and $\Gamma'$ represent the same free knot then in $\ZG$
the following equality holds: $[\Gamma]=[\Gamma']$.\label{mainthm}
\end{thm}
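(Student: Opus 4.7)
The strategy is to prove $[\Gamma]=[\Gamma']$ in $\ZG$ whenever $\Gamma'$ is obtained from $\Gamma$ by a single Reidemeister move, and then iterate. For each move, I expand $[\Gamma']$ as a sum over smoothings of the locally affected even vertices (the parity axioms tell me which of the new vertices count) and match the result with $[\Gamma]$, allowing for R2--equivalence inside $\ZG$.

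For R1, axiom~1 says the new vertex $v$ is even, so I split $[\Gamma']$ into the two smoothings at $v$ for each smoothing of the remaining even vertices. One smoothing at $v$ peels the little loop off as a disjoint circle (two components, hence discarded), while the other reconnects the external half-edges of $v$ into a single arc and reproduces the corresponding summand of $[\Gamma]$. Since the parities of all other vertices are unchanged, this gives $[\Gamma']=[\Gamma]$.

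For R2 with new vertices $u,w$, axiom~2 forces these to have equal parity. If both are odd, they are not summed over, and every summand of $[\Gamma']$ differs from the corresponding summand of $[\Gamma]$ only by the inserted $u$--$w$ bigon, which is killed in $\ZG$. If both are even, I expand over the four smoothings of $\{u,w\}$. A direct half-edge calculation shows that exactly one smoothing reproduces $\Gamma$, one introduces an isolated disjoint circle (discarded), and the remaining two smoothings yield the \emph{same} graph (the two strands through the bigon become swapped), so they cancel in $\Z_{2}$. Hence $[\Gamma']=[\Gamma]$.

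For R3 with participating vertices $a,b,c$, axiom~3 says that either all three are even or exactly one is, so on each side I expand over either $2^{3}=8$ or $2$ smoothings at the even vertices of $\{a,b,c\}$. Because R3 preserves all incidences outside $\{a,b,c\}$ and merely flips the three pairwise incidences among them, each smoothing pattern on the $\Gamma$ side has a natural partner on the $\Gamma'$ side obtained by the corresponding local change of pairings; I verify that one-component smoothings match up bijectively modulo R2, after pairwise cancellation in $\Z_{2}$ of ``redundant'' smoothings analogous to the R2 case. The main obstacle is organizing this casework: keeping eight smoothings straight while ensuring each discarded or canceled summand is accounted for is delicate. A cleaner organization uses the homological viewpoint of Section~4 --- contracting the small triangle of the R3 move produces a common reference graph $\Gamma''$ for both $\Gamma$ and $\Gamma'$, so R3--invariance is forced by the compatibility of the characteristic cohomology class with the contraction maps $i,i'$ described in axiom~4 of the cohomological formulation.
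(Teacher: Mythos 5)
Your overall strategy --- checking invariance move by move and letting the quotient by second Reidemeister moves inside $\mathfrak{G}$ absorb the extra bigons --- is exactly the strategy the paper has in mind (the paper only sketches it, deferring the rigorous check to \cite{Ma,Ma1,Ma2}). Your R1 and R2 cases are correct and complete in outline: for R1 the new vertex is even, one of its smoothings splits off a circle (discarded by the one-component condition) and the other reproduces the corresponding summand of $[\Gamma]$; for R2 the odd case is absorbed by the R2-quotient defining $\mathfrak{G}$, and in the even case the four local states split as one copy of the old summand, one state with an extra circle component (discarded), and two identical states cancelling over $\Z_{2}$.

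The gap is in R3, which is the heart of the theorem. You correctly record that among $a,b,c$ either all three are even or exactly one is, but you never produce the actual matching of local states; the sentence ``I verify that one-component smoothings match up bijectively modulo R2'' is the claim to be proved, not an argument. What is needed is the concrete local analysis: for the single even vertex (and likewise in the all-even case, for the vertex of the triangle one smooths first), one of its two smoothings gives literally isomorphic pictures before and after the move (so the remaining crossings and their smoothings correspond one-to-one), while the other smoothing produces, on each side, a bigon between the third strand and the reconnected arc, and the two sides become equal in $\mathfrak{G}$ only after an R2 reduction; in the all-even case one must further check that, within this second branch, the four smoothings of the remaining two (even) crossings contribute one separated state on each side (these agree after the R2 reduction), two identical states that cancel mod $2$, and one state excluded because the bigon closes into an extra circle component. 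None of this bookkeeping is in your write-up. Moreover, the proposed ``cleaner'' route via Section~4 does not close the gap: the cohomological axioms, including compatibility with the contraction maps $i,i'$, are just an equivalent encoding of the parity axioms --- they constrain the parities of cycles under the move, but they say nothing about the state sum $[\cdot]$, so ``R3-invariance is forced by axiom 4'' is not a valid inference. To complete the proof you must carry out the R3 local-state analysis directly (or quote it from \cite{Ma1,Ma2}).
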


Theorem \ref{mainthm} yields the following
\begin{crl}
Let  $\Gamma$ be an irreducibly odd framed 4-graph with one
unicursal component. Then any representative $\Gamma'$ of the free
knot $K_{\Gamma}$, generated by $\Gamma$, has a smoothing  $\tilde
\Gamma$ having the same number of vertices as $\Gamma$. In
particular, $\Gamma$ is a minimal representative of the free knot
$K_{\Gamma}$ with respect to the number of vertices.\label{sld}
\end{crl}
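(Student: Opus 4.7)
The plan is to deduce the corollary directly from Theorem \ref{mainthm} by exploiting the fact that an irreducibly odd $\Gamma$ contains no even vertices, so the invariant $[\Gamma]$ collapses to a single term.

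First, I would compute $[\Gamma]$ from the definition. Since every chord of $\Gamma$ is odd, the set of even vertices of $\Gamma$ is empty and the sum defining $[\Gamma]$ has exactly one summand, namely the ``empty'' smoothing of $\Gamma$, which is $\Gamma$ itself. Since $\Gamma$ has one unicursal component by hypothesis, this summand is counted, so $[\Gamma]=\Gamma$ as an element of $\ZG$. Applying Theorem \ref{mainthm} to the equivalent diagrams $\Gamma$ and $\Gamma'$ then gives the equality $[\Gamma']=\Gamma$ in $\ZG=\Z_{2}\mathfrak{G}$.

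Next, I would unpack what this equality says about $\Gamma'$. Recall $\mathfrak{G}$ consists of equivalence classes modulo the second Reidemeister move, so $\ZG$ is the $\Z_{2}$-span of such classes. Writing $[\Gamma']$ as a formal $\Z_{2}$-sum of one-component smoothings of $\Gamma'$ at its even vertices and then grouping by R2-equivalence classes, the coefficient of the class of $\Gamma$ must be $1$ while all other coefficients vanish. In particular there is at least one smoothing $\tilde{\Gamma}$ of $\Gamma'$ at some subset of its even vertices lying in the R2-equivalence class of $\Gamma$.

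Finally, I would combine the hypothesis on $\Gamma$ with the R2-equivalence to count vertices. By the ``no decreasing R2'' hypothesis, $\Gamma$ realizes the minimum vertex count in its R2-class, so $|V(\tilde{\Gamma})|\ge |V(\Gamma)|$. On the other hand, each smoothing reduces the vertex count by exactly one, hence $|V(\Gamma')|\ge |V(\tilde{\Gamma})|$. Chaining these two inequalities yields $|V(\Gamma')|\ge |V(\Gamma)|$, which is the asserted minimality of $\Gamma$. For the sharper claim that $\tilde{\Gamma}$ can be chosen with exactly $|V(\Gamma)|$ vertices, I would realize each decreasing R2 move from $\tilde{\Gamma}$ toward $\Gamma$ as a further smoothing of $\Gamma'$: smoothing one vertex of a reducible bigon of $\tilde{\Gamma}$ collapses that bigon, and iterating reaches a smoothing of $\Gamma'$ with the required vertex count.

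The main obstacle is the step identifying ``no decreasing R2 applies to $\Gamma$'' with ``$\Gamma$ is globally R2-minimal in its equivalence class''. A priori a chain of R2 moves could first add a pair of chords to $\Gamma$ and then remove a different pair, landing on a graph with fewer vertices; ruling this out in the all-odd setting requires tracking how bigons can appear along such a chain and using the odd-vertex structure preserved by R2. This combinatorial bookkeeping, rather than the formal manipulation of $[\cdot]$, is where the genuine work lies and is the content that is hidden inside the compact equality $[\Gamma']=\Gamma$.
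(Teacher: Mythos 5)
Your argument is essentially the paper's own proof: compute $[\Gamma]=\Gamma$ from the all-odd hypothesis, apply Theorem \ref{mainthm} to get $[\Gamma']=\Gamma$ in $\ZG$, extract a one-component smoothing $\tilde\Gamma$ of $\Gamma'$ lying in the second-Reidemeister-move class of $\Gamma$, and then use irreducibility to pass from R2-equivalence to the vertex count. The ``main obstacle'' you flag at the end is not really hidden inside the equality $[\Gamma']=\Gamma$: it is exactly the separate statement, recorded in the paper in the subsection on $\ZG$ (and regarded there as trivial), that every framed $4$-graph with one unicursal component has a unique irreducible representative in its R2-class, obtainable by consecutive \emph{decreasing} second Reidemeister moves; this confluence holds for arbitrary framed $4$-graphs, so no bookkeeping of how bigons appear in the all-odd setting is needed. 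One mechanical correction to your last step: to realize a decreasing second Reidemeister move on $\tilde\Gamma$ as a further smoothing of $\Gamma'$ you must smooth \emph{both} vertices of the bigon (at each vertex choosing the reconnection that joins each bigon edge-end to the non-opposite outer half-edge); smoothing only one vertex leaves an extra crossing or produces a kink rather than removing the pair, though with this fix your realization argument goes through and recovers the paper's claim that some smoothing of $\Gamma'$ coincides with $\Gamma$.
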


\begin{proof}[Proof of the Corollary]
By definition of $[\Gamma]$ we have $[\Gamma]=\Gamma$. Thus if
$\Gamma'$ generates the same free knot as $\Gamma$ we have
$[\Gamma']=\Gamma$ in $\ZG$.

Consequently, the sum representing $[\Gamma']$ in $\mathfrak{G}$
contains at least one summand which is equivalent to  $\Gamma$ in
$\ZG$. Thus $\Gamma'$ has at least as many vertices as $\Gamma$
does.

Moreover, the corresponding smoothing of $\Gamma'$ is a diagram,
which is equivalent to $\Gamma$ by second Reidemeister moves. The
irreducibility condition yields that one of smoothings of $\Gamma'$
coincides with $\Gamma$.
\end{proof}

The invariant $[\cdot]$ is constructive. To see that, one should
just look at the structure of the set ${\ZG}$.

\begin{ex}
The simplest example of an irreducibly odd graph (which is minimal
according to Theorem \ref{mnm} is depicted in Fig. \ref{irred}.
\begin{figure}
\centering\includegraphics[width=150pt]{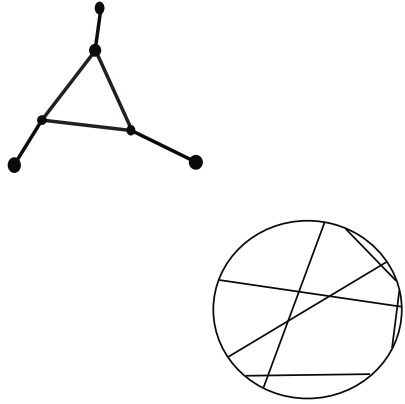} \caption{An
irreducibly odd graph and its chord diagram} \label{irred}
\end{figure}
\end{ex}
\begin{ex}
A free link whose minimality can be established analogously by using
$\{\cdot\}$ is shown in Fig.\ref{frlnk1}
\begin{figure}
\centering\includegraphics[width=200pt]{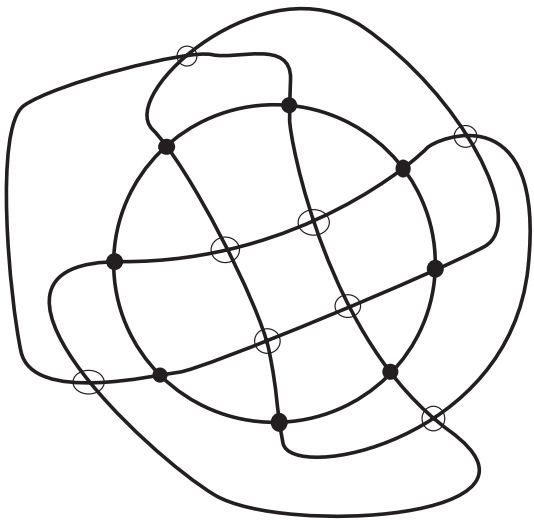}
\caption{Minimal free two-component link} \label{frlnk1}
\end{figure}
\end{ex}

\begin{rk}
In the case of free links with a parity, the bracket $[\cdot]$ does
not work verbatim: by definition, for any free link $L$ with all odd
crossings we have $[L]=0$ since the only even smoothing of $L$ gives
more than one component. Nevertheless, there is a simple
modification of $[\cdot]$, denoted by $\{\cdot\}$, which is the sum
over all even smoothings, which yield links with no split
components. $\{\cdot\}$ allows to prove minimality theorems for
links with arbitrarily many components, for more details, see
\cite{Ma}.
\end{rk}

\subsection{The set ${\ZG}$}

Having a framed $4$-graph, one can consider it as an element of
$\ZG$. It is natural to try simplifying it: we call a graph in $\ZG$
{\em irreducible} if no decreasing second Reidemeister move can be
applied to it. The following theorem is trivial

\begin{thm}
Every $4$-valent framed graph $G$ with one unicursal component
considered as an element of $\ZG$ has a unique irreducible
representative, which can be obtained from $G$ by consequtive
application of second decreasing Reidemeister moves.

\end{thm}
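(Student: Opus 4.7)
The plan is to present this as a standard abstract rewriting argument: termination plus local confluence yields a unique normal form via Newman's lemma. Termination is immediate because every decreasing second Reidemeister move strictly decreases the number of vertices (by exactly two), so starting from $G$ with $n$ vertices the process halts after at most $n/2$ moves in some irreducible graph. This gives existence of an irreducible representative of the class $[G]\in\G$.

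For uniqueness I would establish local confluence of the rewriting relation. Suppose $G$ admits two decreasing R2 moves, targeted at bigons $B_1$ and $B_2$, producing $G_1$ and $G_2$ respectively; the task is to produce a common graph $H$ reachable from both by further decreasing R2 moves. I would argue by case analysis on the combinatorial intersection of $B_1$ and $B_2$. First, if $B_1$ and $B_2$ are vertex-disjoint, the two removals commute and yield the same $H$ after one additional R2 move on each side. Second, if $B_1$ and $B_2$ share exactly one vertex $v$, I would use the framing at $v$ (its two opposite pairs of half-edges) to enumerate the possible local configurations; the point is that the ``non-opposite'' condition forces the two bigons to occupy complementary non-opposite pairs at $v$, and after one bigon is removed the rerouting at the freed half-edges produces a new local structure which is still a valid target of a decreasing R2 move, with both sides meeting at the same $H$. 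Third, if $B_1$ and $B_2$ share both vertices --- i.e.\ the two bigons span the same pair of vertices with several parallel edges between them --- the non-opposite condition at both endpoints is so restrictive that either $G_1=G_2$ already, or a single further decreasing R2 move on each side terminates at a common graph.

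With termination and local confluence in hand, Newman's lemma supplies global confluence, so every maximal sequence of decreasing R2 moves starting from $G$ ends at the same irreducible graph, which is therefore the unique irreducible representative in $\G$ (and in $\ZG$ its class is represented by a single basis element). The main obstacle is the shared-vertex and shared-edge case analysis: one must check carefully that after one bigon is removed, the edge-rerouting dictated by the framing at the shared vertex either preserves the second bigon as a valid R2-target or replaces it by another valid local configuration which reduces to the same $H$. The unicursality assumption is used here to exclude the possibility that a decreasing R2 move splits the graph into multiple unicursal components, which would take us out of the category under consideration.
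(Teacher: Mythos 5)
The paper offers no argument at all here --- it introduces the statement with ``the following theorem is trivial'' --- so there is no proof of the author's to compare yours against; your termination-plus-local-confluence (Newman's lemma) argument is the standard way to make the claim rigorous, and it does go through. Termination is exactly as you say. The deferred case analysis for local confluence is, however, both simpler and slightly different from what you describe. If the two bigons share exactly one vertex, they must use all four half-edges there (two bigon edges cannot share a half-edge), so locally one has two strands crossing at three consecutive vertices; removing either bigon destroys the other bigon in general, so your mechanism ``still a valid target of a decreasing R2 move'' is not what happens --- instead a direct check of the reconnections shows the two one-step reducts are already isomorphic, so they are joinable with zero further moves. If the two bigons share both vertices, then either they coincide, or they use all four parallel edges between the two vertices (both removals then give the free loop $\Gamma_0$), or they share exactly one edge; in this last configuration the non-opposite conditions force the two unshared parallel edges to close up into a separate unicursal component, so it is excluded by the one-component hypothesis. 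That is where unicursality is actually needed; the reason you give --- preventing a decreasing move from splitting the diagram into several components --- never arises, since Reidemeister moves do not change the number of unicursal components (as the paper itself notes). With the disjoint-bigon case commuting trivially, Newman's lemma and the Church--Rosser property then give exactly the statement: each class in $\G$ (hence each basis element of $\ZG$) has a unique irreducible representative reached by decreasing second Reidemeister moves.
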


This allows to recognize elements $\ZG$  easily, which makes the
invariants constructed in the previous subsection digestable.

In particular, the minimality of a framed $4$-graph in $\ZG$  is
easily detectable: one should just check all pairs of vertices and
see whether any of them can be cancelled by a second Reidemeister
move (or in $\ZG$ one should also look for free loops) .

\subsection{The ``even'' Kauffman bracket}

We have shown that the parity consideration allows to construct new
invariants. In fact, parity considerations allow to improve some
existing invariants, as well. Below we show how to strengthen the
Kauffman bracket by using parity, for details see \cite{Ma}. A
construction of a generalized Alexander polynomial and a generalized
quandle are given in  D.M.Afanasiev's paper \cite{Af}.

We shall explicitly write down all formulae for virtual knot theory.
The generalization of the Kauffman bracket given below works also in
some other cases, when for every crossing, one can distinguish
between the two ways of smoothing, $A$ and $B$, so that these
smoothings ``respect the Reidemeister moves as in the classical
case''; also note that these generalizations work for the case of
graph-links \cite{IM,IM1,IM2}.

We recall the definition of the Kauffman bracket. For a virtual
diagram $K$, we set

\begin{equation}
\langle K\rangle =\sum_{s}
a^{\alpha(s)-\beta(s)}(-a^{2}-a^{-2})^{\gamma(s)-1},
\end{equation}
where $\alpha(s)$ and $\beta(s)$ are the numbers of smoothings of
type $A:\skcrossr\to \skcrh$ and of type $B:\skcrossr\to \skcrv$,
respectively, and $\gamma(s)$ denotes the number of circles
(unicursal components) in the state $s$ (after smoothing). Note that
in the definition of the Kauffman bracket we {\em smooth all
crossings}. Thus, the value of the Kauffman bracket is just a
Laurent polynomial in $a$.

Consider the free module $\F$ over $\Z[a,a^{-1}]$ generated by all
framed $4$-graphs.

Let ${\tilde \F}$ be the quotient of $\F$ modulo the following
relation

\begin{enumerate}

\item The second Reidemeister move,

\item The relation $L\sqcup \bigcirc = (-a^{2}-a^{-2})L$, where $L$ stays for any framed $4$-graph and
$L\sqcup\bigcirc$ denote the split sum of $L$ with a circle.
\end{enumerate}

The algorithmic recognizability of elements from ${\tilde \F}$ is
similar to that of $\ZG$.

Let us use the Gaussian parity for virtual knots, and let us
construct the {\em even Kauffman bracket} taking values in the
module $\F$, as follows.

Now, by {\em states} we shall mean those states considered by
smoothings at {\em even crossings}.

 We set

\begin{equation}
\langle K\rangle_{even} =\sum_{s_{{\small{even}}}}
a^{\alpha(s)-\beta(s)}K_{s},
\end{equation}
where $K_{s}$ is the free link obtained from the diagram $K$ by
smoothing with respect to the state $s$. Here $K_{s}$ is considered
as an element from ${\tilde \F}$.

It is easy to see that the invariant $[\cdot]$ (as well as
$\{\cdot\}$) is a specification of the bracket defined above.

Then the following theorem takes place

\begin{thm} The bracket  $\langle \cdot \rangle_{\mbox{even}}$
is invariant under $\Omega_{2},\Omega_{3}$ (and the detour move).
When applying $\Omega_{1}$ the bracket $\langle\cdot \rangle_{even}$
gets multiplied by  $(-a)^{\pm 3}$. The following normalization for
$\langle \cdot\rangle_{even}$ is invariant under all Reidemeister
moves: $X_{\mbox{even}}(K)=(-a)^{-3w(K)}\langle
K\rangle_{\mbox{even}}$, where $w(K)$ stays for the writhe number of
the oriented diagram $K$\label{th15}
\end{thm}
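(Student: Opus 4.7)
The plan is to mimic the classical proof of invariance for the Kauffman bracket, keeping careful book-keeping of which crossings are even and which are odd, and invoking the parity axioms to handle each case. The central observation is that odd crossings are never smoothed: they persist in the summands $K_{s}\in\tilde \F$, so their behaviour under Reidemeister moves is governed not by a skein relation but directly by the relations built into $\tilde \F$ (in particular the $\Omega_{2}$ relation and $L\sqcup\bigcirc=(-a^{2}-a^{-2})L$).

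I would start with $\Omega_{2}$. By the second parity axiom the two crossings involved have the same parity. If both are even, each is smoothed in two ways, and the four resulting states exhibit the standard pairwise cancellation via the factor $(-a^{2}-a^{-2})$, leaving exactly the diagram obtained by removing the bigon. If both are odd, neither is smoothed; each summand $K_{s}$ then contains the $\Omega_{2}$ bigon locally, which is killed by the $\Omega_{2}$ relation in $\tilde \F$. For $\Omega_{1}$, the first parity axiom says the involved crossing is even, so it is smoothed; one smoothing returns the diagram with the kink removed, the other produces that same diagram with a disjoint extra circle, and combining the $a^{\pm 1}$ weights with $L\sqcup\bigcirc=(-a^{2}-a^{-2})L$ reproduces the classical factor $(-a)^{\pm 3}$.

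For $\Omega_{3}$, the third parity axiom forces the number of odd crossings among the three to be $0$ or $2$, and the parities are preserved by the move. If all three are even, a skein expansion at one of them reduces both sides of $\Omega_{3}$ to an instance of $\Omega_{2}$, which has just been handled. If two are odd and one is even, only the even crossing is expanded (two states), and the two odd crossings persist on both sides of $\Omega_{3}$; one then checks that each pairing of a ``before'' state with the corresponding ``after'' state differs by an $\Omega_{2}$ move on the two persistent odd crossings, which vanishes in $\tilde \F$. The detour move affects only virtual crossings, which do not enter the framed $4$-graph class $K_{s}$, so invariance is automatic. Finally, the writhe renormalization $X_{\text{even}}(K)=(-a)^{-3w(K)}\langle K\rangle_{\text{even}}$ absorbs the $\Omega_{1}$ anomaly exactly as in the classical Jones normalization, since $w$ changes by $\pm 1$ under $\Omega_{1}$ and is unchanged under $\Omega_{2},\Omega_{3}$.

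The main obstacle is the mixed subcase of $\Omega_{3}$ (two odd, one even): one has to draw the local configurations carefully, pair each of the two smoothings of the even crossing before the move with the appropriate smoothing after it, and verify that the resulting two odd crossings indeed form an $\Omega_{2}$ bigon on one side if and only if they do so on the other, with the non-local part of the diagram unchanged. Once this local picture is checked — a finite enumeration of the possible placements of the ``odd strand'' in the $\Omega_{3}$ triangle — the proof follows the classical Kauffman bracket pattern throughout.
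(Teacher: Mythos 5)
Your proof is correct and follows essentially the same route the paper intends: the paper itself only says that ``the invariance proof is analogous to the invariance proof of the usual Kauffman bracket'' (deferring details to \cite{Ma}), and your parity-based case analysis --- expanding only even crossings, letting odd crossings persist in the summands $K_{s}$ and be handled by the second-Reidemeister and circle relations in ${\tilde \F}$, then normalizing by the writhe --- is precisely that argument carried out, including the correct identification of the only delicate point (the $\Omega_{3}$ case with two odd and one even crossing, where one must expand at the even crossing and match states across the move). One minor wording caveat: the bigons formed by the persistent odd crossings are not ``killed'' (sent to zero) in ${\tilde \F}$, but identified with the diagram with the bigon removed, which is what your matching of states actually uses.
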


We call $X_{even}(K)$ the {\em even Jones polynomial} of the virtual
knot $K$.

\begin{rk}
The even Jones polynomial is a generalization of the invariant
$\{\cdot\}$ for any knot theory possessing a parity and a
distinguished rule for $A$- and $B$-type smoothing respecting the
Reidemeister moves.

Moreover, for the Gaussian parity, the polynomial
$X_{\mbox{even}}(K)$ is a generalization of the usual Jones
polynomial for classical knots and for knots with all crossings
being even. In this case, in the definition of $\langle \cdot
\rangle_{\mbox{even}}$, all elements $K_{s}$ are free links, which
in the module $\tilde \F$ are multiples of the unknot with
coefficients being powers of the polynomial $(-a^{2}-a^{-2})$.
Considering the generator of the module  ${\tilde \F}$,
corresponding to the unknot, as $1$, we get the standard Jones
polynomial.
\end{rk}

The invariance proof is analogous to the invariance proof of the
usual Kauffman bracket; for details, see \cite{Ma}.

\subsection{Atoms and Parity}

Theorem \ref{mnm} and Corollary \ref{sld} deal with only those free
knots having odd chords. However, the class of knots having with all
 even chords is very important. These are knots corresponding to
{\em orientable atoms}. We shall describe the notion of atom and its
connection to the Gaussian parity, and also construct an example of
a trivial link with all even chords.

In many situations, it is easier to find {\em links} rather than
{\em knots} with desired non-triviality properties. So, we shall
first define a map from free $1$-component links to $\Z_{2}$-linear
combinations of $2$-component links, and then we shall study the
latter by an invariant similar to that constructed in \cite{Ma}.

Ideologically, the first map is a simplified version of Turaev's
cobracket \cite{Turaev} which establishes a structure of Lie
coalgera on the set of curves immersed in $2$-surfaces (up to some
equivalence, the Lie {\em algebra} structure was introduced by
Goldman in a similar way). We shall use a simplification of Turaev's
construction adopted to the case of free knots and links.

The second map takes a certain state sum for a $2$-component free
link, where we distinguish between two types of crossings, and
smooth only crossings of the first type. What should these ``two
types'' mean, will be discussed later.

In some sense, the invariant $[\cdot]$ of free knots constructed in
\cite{Ma} is a diagrammatic extension of a terrifically simplified
Alexander polynomial (we forget about the variable and signs taking
$\Z_{2}$-coefficients). The invariant $\{\cdot\}$ suggested in the
present paper is in the same sense an extension of the terrifically
simplified Kauffman bracket, but again we use diagrams as
coefficients.

Altogether, these two constructions (the bracket and Turaev's
$\Delta$) provide an example of non-trivial and minimal diagrams of
free knots with orientable atoms.

\begin{dfn}
An {\em atom} (originally introduced by Fomenko, \cite{Fom}) is a
pair $(M,\Gamma)$ consisting of a $2$-manifold $M$ and a graph
$\Gamma$ embedded in $M$ together with a colouring of $M\backslash
\Gamma$ in a checkerboard manner. An atom is called {\em orientable}
if the surface $M$ is orientable. Here $\Gamma$ is called the {\em
frame} of the atom, whence by {\em genus} (atoms and their genera
were also studied by Turaev~\cite{Turg}, and atom genus is also
called the Turaev genus~\cite {Turg}) ({\em Euler characteristic,
orientation}) of the atom we mean that of the surface $M$.
 \end{dfn}

Having  an atom $V$, one can construct a virtual link diagram out of
it as follows. Take a generic immersion of atom's frame into $\R^2$,
for which the formally opposite structure of edges coincides with
the opposite structure induced from the plane.

Put virtual crossings at the intersection points of images of
different edges and restore classical crossings at images of
vertices `as above'. Obviously, since we disregard virtual
crossings, the most we can expect is the well-definiteness up to
detours. However, this allows us to get different virtual link types
from the same atom, since for every vertex $V$ of the atom with four
emanating half-edges $a,b,c,d$ (ordered cyclically on the atom) we
may get two different clockwise-orderings on the plane of embedding,
$(a,b,c,d)$ and $(a,d,c,b)$. This leads to a move called {\em
virtualisation}.

 \begin{dfn}
By a {\em virtualisation} of a classical crossing of a virtual
diagram we mean a local transformation shown in
Fig.~\ref{virtualisation}.
 \end{dfn}

 \begin{figure} \centering\includegraphics[width=100pt]{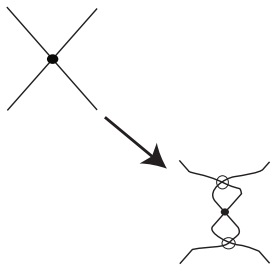}
  \caption{Virtualisation} \label{virtualisation}
 \end{figure}

The above statements summarise as

 \begin{prop}(see, e.g., {\em \cite{MyBook}}).
Let $L_1$ and $L_2$ be two virtual links obtained from the same atom
by using different immersions of its frame. Then $L_1$ differs from
$L_2$ by a sequence of detours and virtualisations.
 \end{prop}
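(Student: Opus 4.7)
The plan is to split the difference between the two immersions of the frame $\Gamma$ into two independent pieces: a combinatorial choice at each vertex, which will be realised by virtualisations, and a choice of how each edge is routed in the plane, which will be realised by detours. Write $\phi_1,\phi_2\colon\Gamma\to\R^{2}$ for the generic immersions producing $L_1$ and $L_2$. At every vertex $v$ of $\Gamma$, the atom prescribes a splitting of the four half-edges into two formally opposite pairs; a planar immersion must place these half-edges around $v$ in a cyclic order compatible with this pairing, and exactly two such cyclic orders exist (they are mirror images of each other, cf.\ the discussion preceding the definition of virtualisation).

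First I would reduce to the case in which $\phi_1$ and $\phi_2$ induce the \emph{same} planar cyclic order at every vertex. For each vertex $v$ at which the two orders disagree, apply a virtualisation to $L_1$ at the corresponding classical crossing; the resulting diagram $L_1'$ encodes an immersion $\phi_1'$ whose vertex cyclic orders match those of $\phi_2$ one by one, while the over/under data at each crossing is preserved. After a small ambient isotopy we may further arrange that $\phi_1'$ and $\phi_2$ agree at every vertex of $\Gamma$ together with the germs of the four incident half-edges.

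It then remains to transform $\phi_1'$ into $\phi_2$ edge by edge. For each edge $e$ of $\Gamma$, the arcs $\phi_1'(e)$ and $\phi_2(e)$ share endpoints and initial directions, and by construction all of their self-intersections and intersections with other edges are virtual crossings. The detour move permits precisely such a rerouting of an edge fragment whose interior crossings are all virtual, so one detour per edge carries $L_1'$ to $L_2$. The step I expect to be most delicate is the first reduction: verifying that the local picture of Fig.~\ref{virtualisation} genuinely implements the interchange of the two planar cyclic orders compatible with a given formally opposite pair structure at a classical crossing, and that the extra virtual crossings it produces can be absorbed by a subsequent detour. This should be a direct check on the local model, comparing the two small-disc configurations up to an isotopy that passes freely through virtual crossings.
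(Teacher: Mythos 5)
Your proof is correct and takes essentially the same route as the paper, which presents this proposition as a summary of the preceding discussion: rerouting of edges (all of whose interior crossings are virtual) is absorbed by detour moves, while the only remaining freedom is the binary choice at each vertex of a planar cyclic order compatible with the formally opposite structure, which is precisely what the virtualisation move toggles. The paper itself only sketches this (referring to \cite{MyBook}), and your vertex-by-vertex then edge-by-edge decomposition is a faithful elaboration of that argument.
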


At the level of Gauss diagrams, virtualisation is the move that does
not change the writhe numbers of crossings, but inverts the arrow
directions. So, atoms just keep the information about signs of Gauss
diagrams, but not of their arrows.

A further simplification comes when we want to forget about the
signs and pass to flat virtual links (see also \cite{VStrings}): in
this case we don't want to know which branch forms an overpass at a
classical crossing, and which one forms an underpass. So, the only
thing we should remember is its frame with opposite edge structure
of vertices (the {\em $A$-structure}). Having that, we take any atom
with this frame and restore a virtual knot up to virtualisation and
crossing change.

The $A$-structure of an atom's frame is exactly a $4$-valent framed
graph.

This perfectly agrees with the fact that {\bf free links are virtual
links modulo virtualization and crossing changes.}

Having a framed $4$-graph, one can consider {\em all atoms} which
can be obtained from it by attaching black and white cells to it. In
fact, it turns out that for a given framed $4$-graph either all such
surfaces are orientable or they are all non-orientable.

To see this, one should introduce the {\em source-sink} orientation.
By a {\em source-sink} orientation of a $4$-valent framed graph we
mean an orientation of all edges of this graph in such a way that
for each vertex some two opposite edges are outgoing, whence the
remaining two edges are incoming.

The following statement is left to the reader as an exercise
\begin{ex}
Let $G$ be a framed $4$-graph. Then the following conditions are
equivalent:

1. $G$ admits a source-sink orientation

2. At least one atom obtained from $G$ by attaching black and white
cells is orientable.

3. All atoms obtained from $G$ by attaching black and white cells
are orientable.

Moreover, if $G$ has one unicursal component, then each of the above
conditions is equivalent to the following: every chord of the
corresponding Gauss diagram $C(D)$ is even.

\end{ex}

Theorem \ref{vyro} leads to a new proof of a partial case the
following
\begin{thm}[First proved by O.Ya. Viro and V.O.Manturov, 2005, first
published in \cite{IM}] The set of virtual links with orientable
atoms is closed. In other words, if two virtual diagrams $K$ and
$K'$ have orientable atoms and they are equivalent, then there is a
sequence of diagrams $K=K_{0}\to K_{1}\dots\to K_{n}=K'$ all having
orientable atoms where $K_{i}$ is obtained from $K_{i}$ by a
Reidemeister move.\label{ManturovViro}
\end{thm}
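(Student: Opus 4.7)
The plan is to identify virtual diagrams with orientable atom with the class $\mathfrak{V}^{1}$ already singled out in the paper, and then push any connecting chain of Reidemeister moves forward through the functorial map $f$ until every intermediate diagram lies in $\mathfrak{V}^{1}$.

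First I would invoke the exercise immediately preceding the theorem: for a framed $4$-graph with one unicursal component, admitting an orientable atom is equivalent to every chord of the associated Gauss diagram being even in the Gaussian sense. So the theorem, in the one-component case, reduces to showing that $\mathfrak{V}^{1}$ is closed inside the set of virtual knots---i.e., that any two of its members representing equivalent virtual knots are joined by a chain of Reidemeister moves lying entirely in $\mathfrak{V}^{1}$.

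Next I would take an arbitrary Reidemeister-detour chain $K=K_{0}\to K_{1}\to\cdots\to K_{n}=K'$ with $K,K'\in\mathfrak{V}^{1}$ and apply $f$ termwise. Theorem~\ref{vyro} gives $f(K)=K$ and $f(K')=K'$, so the endpoints are untouched. The action of $f$ on each elementary move is dictated by the three parity axioms: $\Omega_{1}$ involves only an even crossing and descends to $\Omega_{1}$; $\Omega_{2}$ involves two crossings of equal parity, descending to $\Omega_{2}$ if both are even, or to a detour absorbing the two virtualised crossings if both are odd; $\Omega_{3}$ involves three crossings with parity sum zero, descending either to $\Omega_{3}$ (all three even) or to a detour that slides the virtualised arc past the remaining classical crossing (two odd, one even). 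Thus $f(K_{i})$ and $f(K_{i+1})$ are again related by a single Reidemeister or detour move. If some intermediate $f(K_{i})$ still fails to lie in $\mathfrak{V}^{1}$, I would iterate: each further application of $f$ fixes any diagram already in $\mathfrak{V}^{1}$ and otherwise strictly reduces its classical crossing count (again by Theorem~\ref{vyro}), so after at most $\max_{i}|K_{i}|$ iterations the entire chain is $f$-stable and hence lies in $\mathfrak{V}^{1}$, with endpoints still $K$ and $K'$.

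The main obstacle is the case analysis in the preceding paragraph: verifying that every Reidemeister move pushes forward under $f$ to a Reidemeister or detour move is precisely the well-definedness content of Theorem~\ref{vyro}, and it uses all three Gaussian-parity axioms. Once that is secured, the remainder is a clean bounded monotone-decreasing-crossing-count termination argument. The ``partial case'' qualification in the theorem statement presumably refers to this one-unicursal-component setting, where the Gaussian parity and the map $f$ are explicitly available; the multi-component version would require a component-sensitive refinement of $f$ that is not formally developed in the excerpt.
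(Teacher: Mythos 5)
Your proposal is correct and follows essentially the paper's own route: the paper proves exactly this partial (one-component) case by combining the exercise (orientable atom $\Leftrightarrow$ all chords even) with Theorem \ref{vyro}, deferring the multi-component case to relative parity. Your termwise application and iteration of $f$ along the chain simply spells out the details the paper leaves implicit (they are the same mechanism used in its proof of Theorem \ref{index}).
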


Indeed, for the case of knots, this is a direct corollary from
Theorem \ref{vyro} in view of the exercise above; the general case
of this theorem can be proved by a modification of parity, called
{\em relative parity}, investigated by D.Yu.Krylov and the author,
\cite{KM}.

We give two examples: for a planar $4$-valent framed graph we
present a source-sink orientation (left picture, Fig. \ref{lfr}),
and for a non-orientable $4$-valent framed graph (right picture,
Fig.\ref{lfr}, the artefact of immersion is depicted by a virtual
crossing) we see that the source-sink orientation taken from the
left crossing leads to a contradiction for the right crossing.

\begin{figure}
\centering\includegraphics[width=200pt]{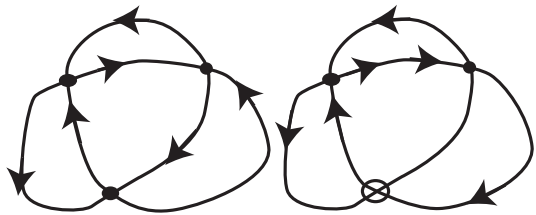} \caption{The
source-sink condition: satisfied (left); violated (right)}
\label{lfr}
\end{figure}

\subsection{Turaev's Cobracket and Its Even Analogue}

There is a simple and fertile idea due to Goldman \cite{Goldman} and
Turaev \cite{Turaev} of transforming two-component curves into
one-component curves and vice versa.

Here we simplify Turaev's idea for our purposes and we shall call
the map we are going to construct ``Turaev's $\Delta$''. Let
${\mathfrak{G}}_{l}$ is be the set of all equivalence classes of
framed graphs with $l$ unicursal components by the second
Reidemeister move and the relation that takes every framed $4$-graph
with a split component to $0$. We shall construct a map from $\ZG$
to $\ZG_{2}$ as follows.

In fact, to define the map $\Delta$, one may require for a free knot
to be oriented. However, we can do without.

Given a framed $4$-graph $G$. We shall construct an element
$\Delta(G)$ from $\ZG_{2}$ as follows. For each crossing $c$ of $G$,
there are two ways of smoothing it. One way gives a knot, and the
other smoothing gives a $2$-component link $G_c$. We take the one
giving a $2$-component link and write

\begin{equation}
\Delta(G)=\sum_{c}G_{c}\in \ZG_{2}
\end{equation}

\begin{thm}
$\Delta(G)$ is a well defined mapping from $\ZG$ to $\ZGG_{2}$
\end{thm}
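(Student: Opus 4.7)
The plan is to verify two things: (i) for each crossing $c$ of a one‑component framed $4$‑graph $G$ there is a unique smoothing producing a $2$‑component graph, so that each summand $G_c$ is unambiguous; and (ii) the resulting sum $\Delta(G)=\sum_c G_c$ is invariant under the second Reidemeister move on $G$, so it descends from framed $4$‑graphs to $\ZG$.

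First I would establish (i) by a direct bookkeeping. At any crossing $c$ the unicursal cycle visits $c$ twice, once along each formally opposite pair of half‑edges; cutting the cycle at these two visits decomposes it into three arcs glued at their endpoints. The two available smoothings correspond to the two ways of re‑pairing the four half‑edges (other than the original opposite pairing, which recovers the crossing). A small case check shows that one of them reassembles the three arcs into a single cycle, while the other produces two disjoint cycles. Hence exactly one smoothing yields two components, which unambiguously defines $G_c$.

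Next I would verify (ii). Let $G'$ be obtained from $G$ by a second Reidemeister move that introduces a bigon with two new vertices $v_1,v_2$, and decompose
\begin{equation*}
\Delta(G') \;=\; \sum_{c\neq v_1,v_2} G'_c \;+\; G'_{v_1} + G'_{v_2}.
\end{equation*}
For $c\neq v_1,v_2$ the smoothing takes place outside the bigon, so $v_1,v_2$ persist in $G'_c$ as a second Reidemeister bigon (now between two strands of a $2$‑component graph). Cancelling that bigon by an R2 move in $\ZGG_2$ identifies $G'_c$ with $G_c$. For the two special terms, I would pass to the Gauss diagram: the chords of $v_1,v_2$ are linked only with each other and cut the circle into four arcs, two of which are empty (the bigon sides). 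A local Gauss-diagram computation then shows that the two-component smoothing at either $v_i$ collapses the two empty arcs and produces the same framed $4$-graph, namely the two non-empty arcs closed into separate loops joined by the remaining bigon vertex as a single crossing. In particular neither term contains a split component (the remaining crossing links the two loops), so the split relation in $\ZGG_2$ does not interfere; and since we work over $\Z_2$, $G'_{v_1}+G'_{v_2}=0$. Combined with the first step, $\Delta(G')=\sum_{c\neq v_1,v_2} G_c=\Delta(G)$.

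The main obstacle is the equality $G'_{v_1}=G'_{v_2}$: one must check, from the Gauss-diagram structure of the bigon, that the two-component smoothings at the two bigon vertices yield literally the same framed $4$-graph (not merely equivalent ones). Once this local identification is in hand, everything else is formal, and the passage from one framed graph to a $\Z_2$-linear combination together with the second Reidemeister invariance gives a well-defined map $\ZG\to\ZGG_2$.
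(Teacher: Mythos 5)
Your route is the same as the paper's: the well-definedness of the two-component smoothing, the observation that for crossings away from the bigon the pair $v_1,v_2$ survives as a removable bigon so that those summands agree in $\ZGG_2$, and the reduction of everything to the cancellation of the two extra summands $G'_{v_1}+G'_{v_2}$ over $\Z_2$ --- the paper disposes of this last point in one phrase (``two new identical summands''). The genuine gap is in your justification of precisely that step, the one you yourself flag as the main obstacle. You assert that the chords of $v_1,v_2$ ``are linked only with each other''; neither half of this is true in general. A second Reidemeister move on a free knot only forces the two arcs forming the sides of the bigon to be empty: the two new chords may be linked with arbitrarily many old chords, and, depending on the mutual position of the two strands, they are either linked with each other (endpoints in cyclic order $v_1 v_2 \dots v_1 v_2$) or unlinked (order $v_1 v_2 \dots v_2 v_1$); both configurations are legitimate $\Omega_2$ moves on framed $4$-graphs. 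Your local computation, and the conclusion that both two-component smoothings give ``the two non-empty arcs closed into separate loops joined by the remaining bigon vertex as a single crossing,'' is correct exactly in the linked case.

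In the unlinked case it fails: smoothing at $v_1$ closes up the long arc that contains both endpoints of $v_2$, so the surviving vertex becomes a curl (a loop carrying no other vertices) on the component coming from that arc, while smoothing at $v_2$ places the curl on the other component. The two resulting framed graphs are in general neither isomorphic nor related by second Reidemeister moves --- a lone curl cannot be created, destroyed or transferred by $\Omega_2$, and one can build small examples (e.g.\ starting from the two-crossing diagram with linked chords and adding an antiparallel bigon) in which one of the two summands reduces by $\Omega_2$ to the one-crossing two-component graph while the other is $\Omega_2$-irreducible with three vertices. Hence the cancellation $G'_{v_1}+G'_{v_2}=0$ in $\ZGG_2$ does not follow from your identification, and repairing it requires a separate idea for the antiparallel configuration (or a modification of the target relations); this is also exactly the point that the paper's own one-sentence argument passes over. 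A smaller remark: since the theorem is subsequently used on free knots, the paper's proof also records the behaviour under $\Omega_1$ (the new summand has a split free loop, hence vanishes) and $\Omega_3$ (summands match up to $\Omega_2$), which your proposal, reading the statement literally as $\Omega_2$-invariance of the source, does not address.
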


The proof is standard and follows Turaev's original idea. One should
consider the three Reidemeister move. The first move adds a new
summand which has a free loop (the latter assumed to be trivial in
$\ZGG_{2}$); for the second Reidemeister move we get two new
identical summands, which cancel each other because we are dealing
with $\Z_{2}$ coefficients. For the third Reidemeister moves the LHS
and the RHS will lead to the summands identical up to second
Reidemeister moves.

\begin{st} The free knot $K_1$ shown in Fig. \ref{frknot1} is
minimal.
\end{st}

\begin{figure}
\centering\includegraphics[width=200pt]{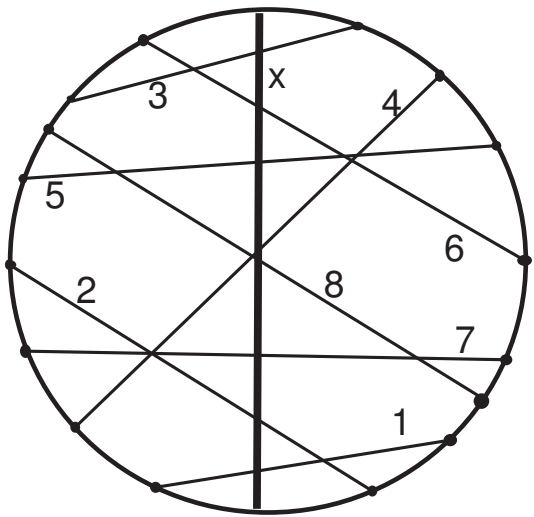} \caption{A
Non-trivial free knot with all even crossings} \label{frknot1}
\end{figure}
To see the minimality, one takes $\Delta(K_1)$ which, as an element
of $\ZG_{2}$, coincides with the link shown in Fig. \ref{frlnk1};
there is exactly one summand obtained by smoothing at $x$; all other
summands cancel by symmetry over $\Z_{2}$. Now, considering
$\{\Delta(K_{1})\}$, we get the desired claim.

Analogously one defines the maps $\Delta_{even}$ and $\Delta_{odd}$
corresponding to smoothing at even (resp., odd) crossings,
\cite{Ma}.

\section{Parity and groups}

It turns out that the parity argument can be used to construct some
extremely simple invariants of free knots; moreover, these
invariants turn out to be obstructions for a free knot to be slice.

The idea goes as follows. Consider the group $G=\langle
a,b,b'|a^{2}=b^{2}=b'^{2}=1,ab=b'a\rangle$. For a Gauss diagram $D$,
we shall construct a word $\gamma(D)$ in the alphabet $a,b,b'$ such
that the conjugacy class of this word in $G$ is an invariant of the
corresponding free knot. Let $D$ be an oriented chord diagram on a
circle $C$, and let $X$ be a fixed point on $C$ distinct from chord
ends.

We say that an odd chord $c$ of $D$ is {\em of the first type} if it
is linked with an odd number of {\em odd chords}, and {\em of the
second type} if it is linked with an odd number of {\em even}
chords.

With the pointed diagram $D$ we associate a word in the alphabet
$a,b,b'$ in the following way. We start walking along the
orientation of the circle $C$ from the point $X$. Every time when we
meet a chord end, we write a letter $a,b$, or $b'$ depending on
whether the corresponding chord is {\em even}, {\em first type odd}
or {\em second type odd}.

Denote the obtained word by $\gamma(D)$; this word generates an
element $[\gamma(D)]$ in the group $G$; sometimes we shall
abbreviate notation and denote $[\gamma(D)]$ just by $\gamma(D)$.

\begin{thm}
If pointed chord diagrams $(D,X)$ and $(D',X')$ generate equivalent
(oriented) free knots then  $[\gamma(D)]=[\gamma(D')]$ in
$G$.\label{inva}
\end{thm}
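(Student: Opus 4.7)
The plan is to verify that $[\gamma(D)]$ gives a well-defined conjugacy class in $G$ by checking invariance under each elementary operation relating two pointed chord diagrams of the same oriented free knot: shifting the basepoint $X$, and the three Reidemeister moves. Any equivalence of pointed diagrams factors into such steps, so this suffices. (The statement ``$[\gamma(D)]=[\gamma(D')]$'' is read as equality of conjugacy classes, consistent with the discussion preceding the theorem.)

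Shifting $X$ past one chord endpoint with letter $\ell\in\{a,b,b'\}$ rewrites $\gamma(D)=\ell\cdot w$ as $w\cdot\ell=\ell^{-1}(\ell w)\ell$, using $\ell^{2}=1$; hence basepoint shifts act by conjugation and the class in $G$ is preserved. For the first Reidemeister move, the parity axiom forces the created chord to be even with adjacent endpoints on the circle, and since it links no other chord no other letter changes; $\gamma$ gets the substring $aa$, trivial since $a^{2}=1$. For the second Reidemeister move, the two chords $c_{1},c_{2}$ have identical linkings with everything outside $\{c_{1},c_{2}\}$, hence the same Gaussian parity and the same counts of linked odd and linked even chords; therefore they contribute the same letter $x\in\{a,b,b'\}$. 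The four endpoints split into two adjacent pairs (the two sides of the bigon), each inserting $xx=1$. A direct check also shows that every other chord is linked with both or with neither of $c_{1},c_{2}$, so parities and types of outside chords are unchanged.

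The decisive case is the third Reidemeister move. By hypothesis $p_{\alpha}+p_{\beta}+p_{\gamma}\equiv 0\pmod{2}$, and a short calculation using $E_{\alpha}+E_{\beta}+E_{\gamma}\subset\{\alpha,\beta,\gamma\}$ shows that the count of odd chords linked with $\alpha$ changes by $p_{\beta}+p_{\gamma}\equiv p_{\alpha}\pmod{2}$; hence \emph{every} odd chord participating in $R_{3}$ has its type flipped and its letter swapped under $b\leftrightarrow b'$. Geometrically, the six endpoints of $\alpha,\beta,\gamma$ occupy two local arcs with no other chord endpoints interspersed, and $R_{3}$ reshuffles them within these arcs. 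If all three of $\alpha,\beta,\gamma$ are even, every letter is $a$, no flip happens, and the local contribution $a^{k}$ is insensitive to the reshuffling. In the remaining case (two odd, one even), the defining relation $ab=b'a$ precisely encodes the elementary move ``swap an $a$-endpoint past an adjacent odd endpoint while flipping $b\leftrightarrow b'$''; decomposing the $R_{3}$ permutation of the six endpoints into such elementary swaps shows that the image of $\gamma$ in $G$ is preserved.

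The main obstacle is the $R_{3}$ step: one must match the combinatorial effect of the move (simultaneous rearrangement of the six endpoints on the circle together with the type-flip of every odd participating chord) with the single defining relation $ab=b'a$. The axiomatization of $G$ with this relation (and nothing beyond $a^{2}=b^{2}=b'^{2}=1$) is designed so that the matching is automatic, but writing it out requires careful case analysis based on which of $\alpha,\beta,\gamma$ are odd and on the possible local arc configurations produced by the various forms of $R_{3}$.
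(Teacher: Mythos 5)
Your overall strategy (a direct check of basepoint shifts and of $\Omega_1$, $\Omega_2$, $\Omega_3$) is exactly what the paper intends, since it only asserts that the theorem is ``a simple check''; and your key computation is right: using $p_{\alpha}+p_{\beta}+p_{\gamma}\equiv 0 \pmod 2$, every odd chord taking part in $\Omega_3$ has its type, hence its letter $b\leftrightarrow b'$, flipped, while non-participating chords keep their letters. The basepoint, $\Omega_1$ and $\Omega_2$ cases are handled correctly (and your reading of the statement as one about conjugacy classes is harmless, since your Reidemeister check in fact preserves the element of $G$ itself, the conjugation arising only from moving $X$).

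However, the decisive $\Omega_3$ verification is not actually carried out, and the sketch you give of it is flawed in two respects. First, the six endpoints of the three participating chords lie on \emph{three} small arcs, one for each branch of the triangle, each arc carrying one endpoint of each of two of the three chords, and the move transposes the pair inside each arc; they do not lie on two arcs. Second, your proposed decomposition into elementary swaps of the form ``move an $a$ past an adjacent odd letter while flipping $b\leftrightarrow b'$'' cannot realize the transposition on the arc shared by the \emph{two odd} chords: there both letters lie in $\{b,b'\}$, no $a$ is adjacent, and both letters flip, so this configuration is not an instance of your elementary move. Since the three arcs are separated by arbitrary subwords coming from the rest of the diagram, the natural (and sufficient) thing is to check each two-letter local contribution separately, and this closes the gap at once: on an arc shared by an odd chord (letter $x\in\{b,b'\}$, flipped to $\bar x$) and the even chord, the change $xa\mapsto a\bar{x}$, resp.\ $ax\mapsto \bar{x}a$, is precisely the relation $ab=b'a$ (equivalently $ba=ab'$); on the arc shared by the two odd chords, the change $xy\mapsto \bar{y}\,\bar{x}$ is an identity in $G$, since for $x=y$ both sides equal $1$ by $b^{2}=b'^{2}=1$, and for $x\neq y$ the two words coincide letter by letter. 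Together with the (easy) remark that outside chords keep their letters because their linked sets and all parities are unchanged, this completes the proof; as written, your central step is deferred to an unexecuted ``careful case analysis'' resting on an incorrect local picture.
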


The proof of this theorem is a simple check of the invariance under
Reidemeister moves. Theorem \ref{inva} has an obvious
\begin{crl}
The conjugacy class of the element $[\gamma(D,X)]$ in $G$ is an
invariant of free knots.
\end{crl}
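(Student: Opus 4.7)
The plan is to deduce the corollary from Theorem \ref{inva} by showing that relocating the basepoint $X$ on the circle $C$ only changes the element $[\gamma(D,X)]\in G$ by conjugation. Once that is established, the conjugacy class is independent of the auxiliary datum $X$, and then Theorem \ref{inva} (invariance of $[\gamma(D,X)]$ for pointed diagrams under Reidemeister moves) promotes it immediately to an invariant of unpointed free knots.

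First I would reduce to the elementary case of sliding the basepoint past a single chord end. If $X$ and $X'$ are two basepoints on $C$ with no chord end between them, then clearly $\gamma(D,X) = \gamma(D,X')$ as words. Any two basepoints can be connected by a sequence of elementary slides, each crossing exactly one chord end. So it suffices to analyze one elementary slide.

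Next, let $X'$ be obtained from $X$ by moving past a single chord end whose chord has type label $\ell\in\{a,b,b'\}$ (even, first-type odd, or second-type odd, respectively). Because parity and type of a chord are intrinsic to the (unpointed) chord diagram and do not depend on where we start reading, the two words are related by a cyclic rotation: $\gamma(D,X)=\ell w$ while $\gamma(D,X')=w\ell$ for some word $w$ in $a,b,b'$. In the group $G=\langle a,b,b'\mid a^2=b^2=b'^2=1,\,ab=b'a\rangle$ every generator is an involution, hence $\ell^{-1}=\ell$, and therefore
\begin{equation}
[\gamma(D,X')] = w\ell = \ell^{-1}(\ell w)\ell = \ell\,[\gamma(D,X)]\,\ell,
\end{equation}
which is a conjugation. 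Iterating, any two choices of basepoint yield conjugate elements, so the conjugacy class $[\![\gamma(D,X)]\!]$ depends only on $D$.

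Finally, to combine with Theorem \ref{inva}: suppose $D$ and $D'$ represent the same free knot. Pick any basepoints $X$ on $D$ and $X'$ on $D'$. We can first slide $X$ and $X'$ to positions compatible with a Reidemeister sequence realizing the equivalence (avoiding the regions where moves take place), and by the preceding paragraph such sliding only affects the elements by conjugation. For the resulting pointed diagrams Theorem \ref{inva} gives equality of $[\gamma(\cdot,\cdot)]$ in $G$, so the conjugacy classes of the original words coincide. The only mildly subtle point, and the one I would check carefully, is the basepoint–versus–Reidemeister-move bookkeeping: one must confirm that basepoints can always be placed outside the disk where a given Reidemeister move is performed so that Theorem \ref{inva} applies unambiguously, but since slides contribute only conjugations this causes no loss.
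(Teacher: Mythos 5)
Your proposal is correct and follows essentially the same route as the paper: the paper's proof is exactly the observation that moving the basepoint past a chord end rotates the word cyclically, and a cyclic rotation $\ell w \mapsto w\ell = \ell^{-1}(\ell w)\ell$ is conjugation in $G$, after which Theorem \ref{inva} handles the Reidemeister moves. (Your remark about involutions is harmless but unnecessary, since cyclic rotation is conjugation in any group.)
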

Indeed, moving the fixed point through a chord end, we change the
word cyclically, which means conjugation in $G$.

The group  $G$ admits a simple combinatorial description. Its Cayley
graph looks like a vertical strip on a squared paper between $x=0$
and $x=1$: we choose the point $(0,0)$ to be the unit in the group;
the multiplication by  $a$ on the right is chosen to one step in a
horizontal direction (to the right if the first coordinate of the
point is equal to zero, and to the left if this first coordinate is
equal to one), the multiplication by $b$ is one step upwards if the
sum of coordinates is even and one step downwards if this sum is
odd, and the multiplication by  $b'$ is one step downwards if the
sum of coordinates is even and one step upwards if the sum of
coordinates is even, see Fig. \ref{band}.

\begin{figure}
\centering\includegraphics[width=40pt]{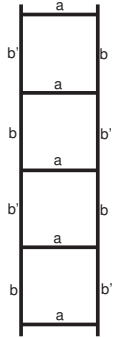} \caption{The Cayley
graph of the group $G$} \label{band}
\end{figure}

With each pointed chord diagram one associates an element from $G$
having coordinates $(0,4l)$. Moreover, the conjugacy class of the
element $(0,4l)$ for $l\neq 0$ consists of the two elements:
$(0,4l)$ and $(0,-4l)$. Thus, for each {\em long free knot} one gets
an integer-valued invariant, equal to  $l$; we shall denote this
invariant by  $l(K)$; each compact free knot has, in turn, the
invariant equal to  $|l|$; we shall denote the latter by $L(K)$.

In Fig. \ref{fig1} we depict a free knot $K_{1}$ with  $l(K_{1})=4$.

\begin{figure}
\centering\includegraphics[width=150pt]{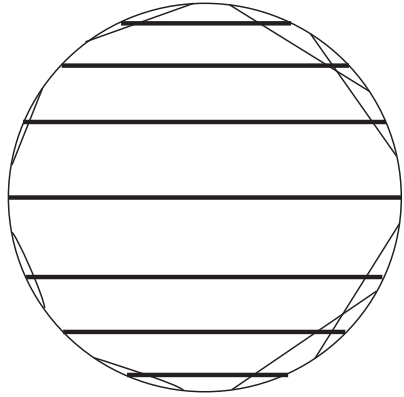} \caption{A non-slice
free knot} \label{fig1}
\end{figure}

So, it is possible to prove non-triviality of some free knots by
using parity arguments in a much simpler ways than in the initial
sections of the present paper.

In some consequent paper we shall prove that $L(K)$ is a {\em
cobordism invariant of free knots}, i.e., if $L(K)$ is non-zero then
the free knot $K$ is not null-cobordant.

This invariant was strengthened in \cite{MM}.

\end{document}